\newcommand{\ie}{i.e.,\@\xspace} 
\newcommand{\eg}{e.g.,\@\xspace} 
\def\be{\begin{equation}}
	\def\ee{\end{equation}}
\newcommand{\bs}{\boldsymbol}
\newcommand{\mc}{\mathcal}
\definecolor{darkgreen}{rgb}{0.0, 0.5, 0.0}
\renewcommand{\emph}{\textit}
\newcommand\numberthis{\addtocounter{equation}{1}\tag{\theequation}}
\newacronym{GNEP}{GNEP}{generalized Nash equilibrium problem}
\newacronym{NE}{NE}{Nash equilibrium}
\newacronym{NEP}{NEP}{Nash equilibrium problem}
\newacronym{GNE}{GNE}{generalized Nash equilibrium}
\newacronym{GNEshort}{GNE}{generalized \gls{NE}}
\newacronym{v-GNE}{v-GNE}{variational \gls{GNE}}
\newacronym{ISS}{ISS}{Input-to-state-stable}
\newacronym{PPA}{PPA}{proximal-point algorithm}
\newacronym{PPPA}{PPPA}{preconditioned \gls{PPA}}
\newacronym{VI}{VI}{variational inequality}
\newacronym{GAE}{GAE}{generalized aggregative equilibrium}
\newacronym{v-GAE}{v-GAE}{variational \gls{GAE}}
\newacronym{KKT}{KKT}{Karush--Kuhn--Tucker}
\newacronym{FQNE}{FQNE}{firmly quasinonexpansive}
\newacronym{FNE}{FNE}{firmly nonexpansive}
\newacronym{ADMM}{ADMM}{alternating direction method of multipliers}
\newacronym{MPMM}{MPMM}{modified proximal method of multipliers}
\newacronym{OPF}{OPF}{optimal power flow}
\newacronym{OPFP}{OPFP}{optimal power flow problem}
\newacronym{NUM}{NUM}{network utility maximization}
\newacronym{END}{END}{Estimation Network Design}
\newacronym{DGD}{DGD}{distributed gradient descent}
\newacronym{ST}{ST}{Steiner Tree}
\newacronym{FB}{FB}{forward-backward}
\newcommand{\0}{\bs 0}
\def\1{{\bs 1}}
\def\argmin{\mathop{\rm argmin}}
\newcommand{\minimize}[1]{\displaystyle\minim_{#1}}
\newcommand{\minim}{\mathop{\hbox{\rm minimize}}}
\newcommand{\col}{\mathrm{col}}
\def\diag{\mathop{\hbox{\rm diag}}}
\def\Null{\mathop{\hbox{\rm null}}}
\def\range{\mathop{\hbox{\rm range}}}
\def\grad#1{{\nabla_{\! #1}}}
\def\spose#1{\hbox to 0pt{#1\hss}}
\newcommand{\proj}{\operatorname{proj}}
\newcommand{\prox}{\operatorname{prox}}
\def\fix{\mathrm{fix}}
\def\gra{\operatorname{gra}}
\def\zer{\operatorname{zer}}
\def\dom{\operatorname{dom}}
\newcommand{\Id}{\op{Id}}
\def\nc{\op{N}}
\def\Hmc{{\mathcal{H}}}
\def\R{\mathbb{R}}
\def\nat{\mathbb{N}}
\def\X{\mathcal{X}}
\def\ybs{{\bs y}}
\DeclareSymbolFont{myletters}{OML}{ztmcm}{m}{it}
\DeclareMathSymbol{\uplambda}{\mathord}{myletters}{"15}
\def\QEDhereeqn{\eqno\let\eqno\relax\let\leqno\relax\let\veqno\relax\hbox{\QED}}
\def\QEDopenhereeqn{\eqno\let\eqno\relax\let\leqno\relax\let\veqno\relax\hbox{\QEDopen}}
\let\cl@part\relax \makeatother
\def\k{{k \in \nat}}  
\crefname{thm}{Theorem}{Theorems}
\crefname{lem}{Lemma}{Lemmas}
\crefname{cor}{Corollary}{Corollaries}
\crefname{claim}{Claim}{Claims}
\crefname{axiom}{Axiom}{Axioms}
\crefname{conj}{Conjecture}{Conjectures}
\crefname{fact}{Fact}{Facts}
\crefname{hypo}{Hypothesis}{Hypotheses}
\crefname{assum}{Assumption}{Assumptions}
\crefname{prop}{Proposition}{Propositions}
\crefname{crit}{Criterion}{Criteria}
\crefname{standing}{Standing Assumption}{Standing Assumptions}
\crefname{defn}{Definition}{Definitions}
\crefname{exmp}{Example}{Examples}
\crefname{rem}{Remark}{Remarks}
\crefname{prob}{Problem}{Problems}
\crefname{prin}{Principle}{Principles}
\crefname{alg}{Algorithm}{Algorithms}
\crefname{figure}{Figure}{Figures}
\crefname{assumption}{Assumption}{Assumptions}
\crefname{thmlisti}{Theorem}{Theorems}
\crefname{lemlisti}{Lemma}{Lemma}
\crefname{asmlisti}{Assumption}{Assumption}
\newtheorem{theorem}{Theorem}
\newtheorem{lemma}{Lemma}
\newtheorem{assumption}{Assumption}
\newtheorem{standing}{Standing Assumption}
\newtheorem{definition}{Definition}
\newtheorem{example}{Example}
\newtheorem{remark}{Remark}
\newlist{thmlist}{enumerate}{1}
\setlist[thmlist]{label={\it(\roman{thmlisti})}, ref=\thepb{(\it \roman{thmlisti})},noitemsep,topsep=0em,leftmargin=*}
\newlist{thmlist2}{enumerate}{1}
\setlist[thmlist2]{label={(\alph{thmlist2i})}, ref={{(\alph{thmlist2i})}},topsep=0em}
\newlist{asmlist}{enumerate}{1}
\setlist[asmlist]{label={\it(\roman{asmlisti})}, ref=\theassumption{(\it \roman{asmlisti})},noitemsep,topsep=0em,leftmargin=*}
\declaretheorem[
name=Design Problem,
Refname={Design Problem,Design Problem},
]{pb}
\Crefname{pb}{Problem}{Problems}
\Crefname{asm}{Assumption}{Assumptions}
\def\I{\mc{I}}
\def\i{i\in\I}
\def\M{\mc{M}}
\def\m{{m\in\mc{M}}}
\def\P{\mc{P}}
\def\p{{p\in\mc{P}}}
\def\q{{q\in\mc{Q}}}
\def\Q{\mc{Q}}
\def\y{y}
\def\hy{\h{y}}
\def\hyt{\tilde{\h{y}}}
\def\hyhat{\hat{\h{y}}}
\def\hypar{{\h{y}}_{\scriptscriptstyle \parallel}}
\def\hyperp{{\h{y}}_{\scriptscriptstyle \perp}}
\def\hyhatpar{{\h{y}}_{\scriptscriptstyle \parallel}}
\def\hyhatperp{\hat{\h{y}}_{\scriptscriptstyle \perp}}
\def\yhatpar{{{y}}_{\scriptscriptstyle \parallel}}
\def\hlambda{\h{\lambda} }
\def\hsigma{\h{\sigma}}
\def\hsigmat{\tilde{\h{\sigma}}}
\def\hs {\h{s}}
\def\hz{\h{z}}
\def\hystar{\h{y}^{\star}}
\def\mcV {\mc{V}}
\def\homega{\h{\omega}}
\def\eigmin{\uplambda_{\operatorname{min}}}
\def\eigmax{\uplambda_{\operatorname{max}}}
\def \Fbs{{   { \bs{\op F}}   }}
\def\Fbstilde{ { \bar {\bs{ \op F}}}}
\def \Rmc{{   \textrm{R}   }}
\def \Fmc{{   \mc{F}   }}
\def\n#1{{n_{#1}}}
\def\Ebs#1{{\h{\mc{C}}}_{#1}}
\def\Ebsperp#1{{\h{\mc{C}}}{}_{\!\scriptscriptstyle \perp}^{  \mat{#1}}}
\def\Etbs#1{\tilde{\h{\mc{C}}}_{#1}}
\def\perm{\textrm{P}}
\def\Piparallel#1{\Pi_{\scriptscriptstyle \parallel}^{\mat{#1}}}
\def\Piperp#1{\Pi_{\!\scriptscriptstyle \perp}^{\mat{#1}}}
\def\op{\mathsf}
\def\mat{\mathrm}
\def\set{\mc}
\def\h{\bs}
\def\design{design}
\def\id{\mat{I}}
\def\Ntilde#1{ { N}_{#1}}
\def\E#1{\mc{E}^{\scriptstyle \text{#1}}}
\def\g#1{\mc{G}^{\scriptstyle \text{#1}}}
\def\V#1{\mc{V}^{\scriptstyle \text{#1}}}
\def\W#1{\mat W^{\scriptstyle \text{#1}}}
\def\L#1{\mat L^{\scriptstyle \text{#1}}}
\def\D#1{\mat D^{\scriptstyle \text{#1}}}
\def\w#1{w^{\scriptstyle \text{#1}}}
\def\neig#1#2{\mc{N}{~\!\!}^{\scriptstyle \text{#1}}({#2})}
\def\neigo#1#2{\overline{\mc{N}}{~\!\!}^{\scriptstyle \text{#1}}({#2})}
\def\gD#1{{\mc{G}}^{\scriptstyle \text{D}}_{#1}}
\def\WD#1{{\mat W}^{\scriptstyle \text{D}}_{#1}}
\def\LD#1{{ \mat L}^{\scriptstyle \text{D}}_{#1}}
\def\ED#1{\mc{E}^{\scriptstyle \text{D}}_{#1}}
\def\neigD#1#2{{\mc{N}}{~\!\!}^{\scriptstyle \text{D}}_{#1}(#2)}
\def\qD#1{{q}^{\scriptstyle \text{D}}_{#1}}
\def\WDbs{ {\h{\mat W}}^{\scriptstyle \text{D}}}
\def\WDtbs{\tilde{\bs{ \mat W}}{}^{\scriptstyle \text{D}}}
\def\LDbs{\h{\mat L}^{\scriptstyle \text{D}}}
\def\LDtbs{\tilde{\bs{\mat L}}{}^{\scriptstyle \text{D}}}
\def\Esup#1#2{\mc{E}^{\scriptstyle \text{#1},#2}}
\def\gsup#1#2{\mc{G}^{\scriptstyle \text{#1},#2}}
\def\neigsup#1#2#3{\mc{N}{~\!\!}^{\scriptstyle \text{#1},#3}({#2})}
\def\neigosup#1#2#3{\overline{\mc{N}}{~\!\!}^{\scriptstyle \text{#1},#3}({#2})}
\def\gDsup#1#2{{\mc{G}}^{\scriptstyle \text{D},#2}_{#1}}
\def\WDsup#1#2{{\mat W}^{\scriptstyle \text{D},#2}_{#1}}
\def\LDsup#1#2{{ \mat L}^{\scriptstyle \text{D},#2}_{#1}}
\def\LDbssup#1{\h{\mat L}^{\scriptstyle \text{D},#1}}
\def\LDsup#1{{\mat L}^{\scriptstyle \text{D},#1}}
\def\gDlambda#1{{\mc{G}}^{\scriptstyle \text{D},\lambda}_{#1}}
\def\gDsigma#1{{\mc{G}}^{\scriptstyle \text{D},\sigma}_{#1}}
\begin{document}

\title{
	The END:
	Estimation Network Design for games under partial-decision information
}

\author{ Mattia Bianchi, \IEEEmembership{Member, IEEE}, Sergio Grammatico, \IEEEmembership{Senior Member, IEEE}
	\thanks{Mattia Bianchi is with the Automatic Control Laboratory, ETH Zürich, Switzerland (\texttt{mbianch@ethz.ch}). Sergio Grammatico is with the Delft Center for Systems and Control, TU Delft, The Netherlands (\texttt{s.grammatico@tudelft.nl}). This work was  supported by NWO (OMEGA 613.001.702) and by the ERC (COSMOS, 802348).}
}

\maketitle

\begin{abstract}
    Multi-agent decision problems are typically solved via distributed iterative algorithms, where the agents only communicate between themselves on a peer-to-peer network. 
     Each agent usually maintains a copy of each decision variable, while agreement among the local copies is enforced via consensus protocols. Yet, each agent is often directly influenced  by a small portion of the decision variables only: neglecting this sparsity results in redundancy, poor scalability with the network size, communication and memory overhead. 
     To address these challenges, we develop \gls{END}, a framework for the  design and analysis of distributed algorithms, generalizing  several recent approaches. \gls{END} algorithms can  be tuned to  exploit problem-specific sparsity structures, by optimally allocating  copies of each variable  only to a subset of agents, to improve efficiency and minimize redundancy.
     We illustrate the  \gls{END}'s potential by designing new algorithms for \gls{GNE} seeking under partial-decision information, that can leverage the sparsity in cost functions, constraints and aggregation values. 
     Finally, we test numerically our methods on a unicast rate allocation problem, revealing greatly reduced communication and memory costs. 
     
\end{abstract}

\begin{IEEEkeywords}
Nash equilibrium seeking,  optimization algorithms,  variational methods.
\end{IEEEkeywords}
\section{Introduction}\label{sec:introduction}

\IEEEPARstart{L}{arge-scale} problems in machine learning \cite{Boyd_ADMM_2010}, signal processing \cite{FacchineiKanzow_Generalized_4OR2007} and decentralized control \cite{Durr_Stankovic_Johansson_ACC_2011}
 involve huge volumes of data, often spatially scattered.  In these settings, \emph{distributed} multi-agent computation (where the computational burden is partitioned among a group of agents, only allowed to share information over a peer-to-peer communication network, without  central  gathering of the data) is emerging as a fundamental paradigm to enable scalability, privacy preservation and robustness. On the downside, distributed algorithms require storage and transmission of  multiple local copies of some variables, a form of redundancy  that is absent in centralized processing. 
 This may result in prohibitive  memory and communication requirements, and hinders scalability  when  the  dimension of the copies grows with the network size. 

For example, in \gls{NE} problems under \emph{partial-decision information} \cite{YeHu_Consensus_TAC2017,Romano_Penalty_TCNS2023}, each of a group of agents $i \in \mc{I} \coloneqq \{1,2,\dots,N\}$
 has a private cost function $f_i(x_i,x_{-i})$, 
 depending both on its action (decision variable) $x_i$ and on the actions of the other agents $x_{-i} = (x_j)_{j \neq i} $. The goal is to compute a \gls{NE}, namely a set of actions that simultaneously solves the coupled problems 
 \begin{align} \label{eq:gameintro}
    \minimize{x_i}  \ f_i(x_i,x_{-i}), \qquad \forall \i. 
\end{align}
The complication is that each agent can only receive data from some neighbors over a communication graph, although its cost depends on the actions of \emph{non-neighboring} agents. As agent $i$ only has ``\emph{partial information}" about $x_{-i}$, standard \gls{NE} refinement procedures, such as the pseudo-gradient method
\begin{align}
    x_i^{k+1} = x_i^k - \nabla_{x_i} f_i (x_i^k,x_{-i}^k) \qquad \i,  \k,
\end{align}
cannot be implemented distributedly. Instead, to cope with the lack of knowledge, it is typically assumed that each agent keeps and exchanges at each iteration with its neighbors an estimate of  the action of every other agent \cite{YeHu_Consensus_TAC2017,Pavel_GNE_TAC2020,Bianchi_GNEPPP_AUT2022,Gadjov_Resilient_TCNS2023,Zhu_GNEdigraphs_TCNS2021}: yet, this might be impractical, especially if the number of agents $N$ is large. On the other hand, the cost of agent $i$ often  depends only on the actions of  much smaller subset of agents $\neig{I}{i} \subset \mc{I}$, possibly of cardinality independent of $N$ \cite{Salehisadaghiani_Graphical_AUT2018}, e.g. in congestion control problems for wireless \cite{Alpcan:Basar:CongestionControl:CDC:2012} and optical \cite{Pavel_optical_2012} networks. To highlight this dependency, let us  write
 \begin{align}\label{eq:partiallycoupledcost}
     f_i(x_i,x_{-i}) = f((x_j)_{j\in \neig{I}{i}}).
 \end{align}

Unfortunately, the available \gls{NE} seeking algorithms cannot take advantage of the \emph{sparsity} structure in \eqref{eq:partiallycoupledcost}. The sole exception are the algorithms proposed by Salehisadaghiani and Pavel  \cite{Salehisadaghiani_Graphical_AUT2018,Salehisadaghiani_Nondoubly_EAI2020}, where each agent is only assigned  proxies of the decisions that directly influence its cost -- but provided that the communication network can be  freely designed, which is for example not the case for ad hoc networks. 

Similarly, in \gls{GNEshort} seeking (where the agents are also coupled via shared constraints), each agent is typically required
to keep an estimate of all dual variables \cite{Pavel_GNE_TAC2020,Bianchi_GNEPPP_AUT2022}, while possibly being directly affected by only a few of them. Methods to improve memory and communication efficiency in this setup are unknown in the literature.

Such scenarios are not limited to games. in fact, many big-data \emph{optimization} problems are also \emph{partially separable} \cite{NecoaraClipici_Coordinate_SIAM2016} (\ie  the local cost of each agent only depends on a limited portion of the optimization vector)
-- but this fact is rarely exploited in \emph{distributed} algorithms. 
The issue was solely considered, via dual methods, by  Mota et al. \cite{Mota:LocalDomains:TAC:2014}, and
later by Alghunaim, Yuan and Sayed \cite{Alghunaim_SparseConstraints_TAC2020,Alghunaim_Stochastic_TAC2020}: 
in their approach, each component of the optimization variable  is only estimated by a suitable subset of the agents. As a drawback, the dual reformulation is  effective over undirected communication networks and viable for optimization problems only.

\smallskip
\emph{Motivation:}
Our work is motivated by the observation that, in  multi-agent  applications,  the coupling among the agents often  exhibits some sparsity. This sparsity \emph{could} and \emph{should} be exploited to enhance efficiency of distributed algorithms, e.g. by reducing the number of redundant estimates in the network.
While ad hoc schemes were developed for  special sparsity patterns (e.g. when each agent is only coupled with its communication neighbors \cite{Notarnicola_Partitioned_TCNS2018} or neighbors' neighbors \cite{Salehisadaghiani_Graphical_AUT2018}), what is missing is a systematic methodology to account for general, problem-specific, sparsity structures -- but without resorting to a case-by-case convergence analysis.

\emph{Contributions:} To fill this gap, we introduce  \glsfirst{END}, 
a rigorous graph-theoretic language to describe how the estimates of any \emph{variables of interest} are allocated and combined among the agents in any distributed algorithm (Section~\ref{sec:mathbackground}).
The \gls{END} notation has the following advantages:
\begin{itemize}[leftmargin=*]
    \item 
\emph{Versatile and algorithm-free}: 
	the variables of interest include  any quantity  some agents need to reach consensus upon (decision vectors or dual multipliers in optimization problems, the gradient of a cost,  an aggregative value), making \gls{END} applicable to  virtually any networked decision problem, including  consensus optimization \cite{Nedic_DIGing_SIAM2017}, game equilibrium seeking on networks,  common fixed point computation \cite{FullmerMorse_Paracontractions_TAC2018};
 \item \emph{High customizability yet unified analysis}: the estimates allocation in \gls{END} algorithms is a degree of freedom, and can be tailored to specific problem instances, for example by embedding efficiency criteria (e.g., minimal memory allocation, bandwidth constraints;  see Section~\ref{subsec:design}). This is without requiring case-by-case analysis: in fact,  \gls{END} unifies the convergence proof of  sparsity-unaware algorithms (e.g. \cite{Salehisadaghiani:Pavel:gossip:AUT:2016}) with that of algorithms specifically devised for problems with unique sparsity structure (e.g.\cite{Salehisadaghiani_Graphical_AUT2018}).
\end{itemize}


Specifically, in this paper, we showcase the \gls{END} framework on the topical problem of  \gls{GNE} seeking under partial-decision information. We analyze two \gls{END} algorithms and discuss  how they improve the literature, in terms of both theoretical guarantees and communication/memory efficiency. Concretely:
\begin{enumerate}[leftmargin=*]
	\item
	For \gls{NE} problems, we prove linear convergence of a  pseudo-gradient algorithm over directed graphs. By leveraging the expressivity of the \gls{END} framework, we are able to relax the assumptions on the communication network  postulated in the existing literature \cite{Bianchi_Directed_CDC2020,TatarenkoShiNedic_Geometric_TAC2021} (specifically, we admit row stochastic graphs and do not rely on  Perron eigenvectors).  Further, we allow for much more ductile estimate assignment compared to\cite{Salehisadaghiani_Graphical_AUT2018,Salehisadaghiani_Nondoubly_EAI2020}. In particular, special cases of our method recover both the full- and the partial-decision information setup (and a plethora of intermediate scenarios), for which a joint convergence analysis was not available. These theoretical results also demonstrate the  potential of the \gls{END} notation for analysis purposes, even in problems without any sparsity (Section \ref{subsec:NEalgorithms});
 \item For \gls{GNE} problems, we study a novel class of aggregative games, generalizing that considered in \cite{Eslami:Unicast:TAC:2022}. We demonstrate that the amount of copies allocated in the network, for both the  aggregation function and the dual variables, can be reduced according to the problem sparsity. In particular, for the first time, we explicitly account for the possible sparsity in the coupling constraints (Section~\ref{subsec:GNEalgorithms}). Finally, we test our algorithm against its sparsity-unaware counterpart in a unicast rate allocation application.
%
%
Our simulations show that not only  communication and memory overhead are significantly reduced, but also that convergence 
speed and scalability are
improved (Section~\ref{sec:numerics}).
\end{enumerate}
%

\smallskip
We focus on games under partial-decision information here; we refer the  interested reader to the extended draft of this paper \cite{Bianchi_minG_TAC_2022} for numerical and theoretical results of \gls{END} algorithms for multi-agent optimization.
\vspace{-0.2em}

\subsection{Background} \label{sec:background}
\vspace{-0.2em}
\subsubsection{Basic notation}  $\mathbb{N}$ is the set of natural numbers, including $0$.
$\R$ ($\R_{\geq 0}$) is the set of (nonnegative) real numbers.
$\0_q\in \R^q$  ($\1_q\in\R^q$) is a vector with all elements equal to $0$ ($1$); $\0_{q\times p} \in \R^{q\times p}$ is a matrix with all elements equal to $0$;  $\mathrm{I}_q\in\R^{q\times q}$ is an identity matrix; the subscripts may be omitted when there is no ambiguity. $e_i$ denotes a vector of appropriate dimension with $i$-th element equal to 1 and all other elements equal to 0.
For  a matrix $ \mat A  \in \R^{p \times q}$, $[\mat A]_{i,j}$ is the element on  row $i$ and column $j$; $\Null(\mat A)\coloneqq \{x\in\R^q \mid \mat Ax=\0_n\}$ and $\range(\mat A)\coloneqq \{v\in\R^p \mid v=\mat Ax, x\in\R^q \}$; $\| \mat A \|_\infty$ is the maximum of the absolute row sums of $\mat A$.
If $ \mat A= \mat A^\top\in\R^{q\times q}$, $\uplambda_{\textnormal{min}}(\mat A)=:\uplambda_1(\mat A)\leq\dots\leq\uplambda_q(\mat A)=:\uplambda_{\textnormal{max}}(\mat A)$ denote its eigenvalues.
$\diag(\mat A_1,\dots,\mat A_N)$ is the block diagonal matrix with $\mat A_1,\dots,\mat A_N$ on its diagonal. Given $N$ vectors $ x_1, \ldots, x_N$,  $\col (x_1,\ldots,x_N ) \coloneqq  [ x_1^\top \ldots  x_N^\top ]^\top$. $\otimes$ is the Kronecker product.

\subsubsection{Graph theory} \label{sec:graphtheory}
A (directed) graph $\g{}=(\mcV,\E{})$
consists of a nonempty set of vertices (or nodes) $\mcV=\{1,2,\dots, V \}$ and a set of edges $\E{}\subseteq \mcV \times \mcV $. We denote by $\neig{}{v}\coloneqq \{ u\mid (u,v) \in \E{} \} $ and $\neigo{}{v}\coloneqq \{ u \mid (v,u) \in \E{} \} $ the set of in-neighbors (or simply neighbors)  and out-neighbors of vertex $ v \in \mcV$, respectively.
A path  from $v_1\in \mcV $ to  $v_N\in \mcV$ of length $T$ is a sequence of vertices $(v_1,v_2,\dots,v_T)$ such that $(v_t,v_{t+1} )\in \E{}$ for all $t=1,\dots, T-1$. $\g{}$ is rooted at $v\in \mcV{}$ if there exists a path  from $v$ to each   $u \in \mcV{} \backslash \{v\}$. $\g{}$ is strongly connected if there exist a path from $u$ to $v$, for all $u,v \in \mcV{}$; in case $\g{}$ is undirected, namely if $(u,v) \in\E{}$ whenever $(v,u) \in\E{}$, we simply say that $\g{}$ is connected. The restriction of the  graph $\g{}$ to a set of vertices $\V{A} \subseteq \V{}$ is defined as $\g{}|_{\V{A}} \coloneqq ( \V{A}, \E{} \cap (\V{A} \times \V{A}))$.   We also write $\g{}=(\V{A}, \V{B},\E{})$ to highlight that $\mc{G}$ is bipartite, namely $\mc{G}=(\mcV,\E{})$ with  $\V{} = \V{A} \cup \V{B}$ and $\E{} \subseteq \V{A} \times \V{B}$.
We may associate to  $\g{}$ a weight matrix $\W{} \in \R^{V\times V}$ compliant with $\g{}$, namely $\w{}_{u,v}\coloneqq[\W{}]_{u,v}>0$ if $(v,u)\in \E{}$, $\w{}_{u,v}=0$ otherwise;
we denote by
$\D{}=\diag((\textrm{deg}{}(v))_{v\in\mcV{}})$ and $\L{}=\D{}-\W{}$ the in-degree  and Laplacian matrices, with $\textrm{deg}(v)=\textstyle \sum _{u\in\mcV{}} \w{}_{u,v}$.  $\g{}$ is unweighted if $\w{}_{u,v}=1$ whenever $(v,u) \in \E{}$. 
Given two graphs $\g{A}=(\V{A},\E{A}) $ and $\g{B}=(\V{B},\E{B}) $, we write  $\g{A} \subseteq \g{B}$ if $\g{A}$ is a subgraph of $\g{B}$, \ie if $\V{A} \subseteq \V{B}$ and $\E{A} \subseteq \E{B}$; we let $\g{A} \bigcup \g{B} \coloneqq  (\V{A} \cup \V{B}, \E{A} \cup \E{B})$. Given  a graph $\g{}=(\mcV,\E{})$, compliant weights $\W{}$, terminals $\mc T\subseteq \mcV{}$,
 we define the following problems:
\begin{itemize}[leftmargin=*]
	\item   Steiner tree ST$(\g{},\mc T,\W{})$: find an undirected connected subgraph $\g{*}=(\V{*},\E{*})\subseteq \g{}$, $\mc T\subseteq \V{*}$, with minimum cost (\ie minimizing $\sum_{(v,u)\in \E{*}} \w{}_{u,v}$);
	\item Unweighted Steiner tree  UST$(\g{},\mc T)$: find an undirected connected subgraph $\g{*}=(\V{*},\E{*})\subseteq \g{}$, $\mc T\subseteq \V{*}$, with minimum number of nodes (equivalently, of edges).
\end{itemize}
\subsubsection{Euclidean spaces} Given a positive definite matrix $ \R^{q\times q} \ni \mat Q \succ 0$,  $\mc{H}_{\mat Q}\coloneqq (\R^q,\langle \cdot \mid \cdot \rangle _\mat{P})$ is the Euclidean space obtained by endowing $\R^q$ with the
$\mat{Q}$-weighted inner product  $\langle x \mid y \rangle _\mat{Q}=x^\top \mat Q y$,  $\| \cdot \|_{\mat Q}$ is the associated norm; we omit the subscripts if $\mat Q = \id $. Unless otherwise stated, we  assume to work in $\Hmc = \mc{H}_{\mat{I}}$.

\subsubsection{Operator theory}
A set-valued  operator $\op {F}:\R^q\rightrightarrows \R^q$ is characterized by its graph
$\gra (\op {F})\coloneqq \{(x,u) \mid u\in \op{F}(x)\}$. $\dom(\op{F})\coloneqq \{x\in\R^q \mid  \op{F}(x)\neq \varnothing \}$,
$\fix\left( \op{F}\right) \coloneqq  \left\{ x \in \R^q \mid x \in \op{F}(x) \right\}$ and $\zer\left( \op{F}\right) \coloneqq  \left\{ x \in \R^q \mid \0 \in \op{F}(x) \right\}$ are the domain, set of fixed points and set of zeros, respectively. $\op{F}^{-1} $ denotes the inverse operator of $\op{F}$, defined as $\gra (\op{F}^{-1})=\{(u,x)\mid (x,u)\in \gra(\op{F})\}$.
$\op{F}$ is
($\mu$-strongly) monotone if $\langle u-v \mid x-y\rangle \geq 0$ ($\geq \mu\|x-y\|^{2}$) for all $(x,u)$,$(y,v)\in\gra(\op {F})$.
$\Id$ is the identity operator. 
For  a set  $\Omega \subseteq \R^q$, 
$\nc_{\Omega}: \Omega \rightrightarrows \R^q:x\mapsto \{ v \in \R^q \mid \sup_{z \in \Omega} \, \langle v \mid z-x \rangle \leq 0  \}$ is the normal cone operator of $\Omega$. If $\Omega$ is closed and convex, then 
$(\Id+\nc_\Omega)^{-1}=\proj_\Omega$ is the Euclidean projection onto  $\Omega$.

\section{Estimate network \design{}  }\label{sec:mathbackground}

In this section, we introduce
the general \gls{END} framework and notation. 
We then leverage these tools to develop novel and improved \gls{GNE} seeking algorithms in \cref{sec:GNE}.

\subsection{The \gls{END} setup}\label{subsec:setup}

We start by defining a generic
information structure, 
useful  to describe existing distributed algorithms and to design new ones. It is characterized by:

\begin{itemize}
	\item  a set of agents $\I \coloneqq\{ 1,2,\dots,N\}$;
	\item a given (directed) \emph{communication} network $\g{C}=(\I,\E{C})$, over which the agents can exchange information:  agent $i$ can receive information from agent $j$ if and only if
	$j\in\neig{C}{i}$;
	\item a variable of interest $y \in \R^ \n{y}$, partitioned as $y=\col((y_p)_{p\in \P})$, where
    $y_p\in\R^{\n{y_p}}$
    and $\P\coloneqq\{1,2,\dots,P\}$;
	\item a given bipartite directed \emph{interference} graph $\g{I}=(\P,\I,\E{I})$,  $\E{I}\subseteq \P \times \I$, that specifies which components of $y$  are indispensable for each agent:  $p\in \neig{I}{i}$ means that agent $i$ needs (an estimate of) $y_p$ to perform some essential local computation.\footnote{For ease of notation, we  assume that $\neigo{I}{p}\neq \varnothing$ for all $\p$ (i.e., each component of $y$ is indispensable for some agent).}
\end{itemize}
The agents may be unable to  access the value of the variable of interest $y$. Instead, each agent keeps an estimate of some (possibly all) the components $y_p$'s, and exchanges its estimates with some neighbors, as specified by:

\begin{itemize}
	\item a bipartite directed \emph{estimate} graph $\g{E}=(\P,\I,\E{E})$, $\E{E}\subseteq \P \times \I$, that specifies which components of $y$ are estimated by each agent: agent $i$ keeps an estimate $\h{y}_{i,p}\in\R^{\n{y_p}}$ of $y_p$ if and only if $p\in \neig{E}{i}$, and thus $\neigo{E}{p}$ is the set of agents that keeps an estimate of $y_p$;
	\item $P$ directed \emph{\design{}} graphs  $\{\gD{p} \}_{\p}$, with  $\gD{p}=(\neigo{E}{p},\ED{p})$, to describe how the agents exchange their estimates: agent $i$ can receive $\h{y}_{j,p}$ from agent $j$ if and only if $j\in\neigD{p}{i}$.
\end{itemize}

Note that the vertices of $\gD{p}$ are $\neigo{E}{p}$, namely only the agents that keep an
estimate of $y_p$ could receive an estimate of $y_p$. Furthermore, note that the graph $\g{E}$ is uniquely determined by  $\{\gD{p} \}_{\p}$ (but not vice versa).

%

\begin{example}[Partially separable costs] \label{ex:do1}  
Consider a multi-agent problem, where the agents can only exchange information over a   
 communication network $\g{C}$, and each agent $\i$ has a private cost function $f_i(x)$, depending on a global decision vector $x\in \R^\n{x}$; for instance an optimization problem (where the goal is to minimize $\sum_{\i} f_i$) or a game as in \eqref{eq:gameintro}. Set $y= x$ and partition $y = (y_p)_{p\in\mc{P}}$ in $P$ components. 
In several engineering applications, like network control and data ranking \cite{NecoaraClipici_Coordinate_SIAM2016},
each cost function $f_i$  depends only on some of the components of $y$, as it can be specified by an interference graph $\g{I}$: $f_i$ depends on $y_p$ if and only if $p\in\neig{I}{i} \subseteq\P$. With some abuse of notation, we emphasize
 this fact by writing 
	\begin{align}
		f_i(y)=f_i ((y_p)_{p\in \neig{I}{i}}) . \label{eq:fipartiallyseparable}
	\end{align}
In this setup, the usual approach to solve distributed optimization \cite{Nedic_DIGing_SIAM2017} and \gls{NE}  problems \cite{YeHu_Consensus_TAC2017}  is to  assign to each agent $\i$ an estimate $\tilde{\hy}_i\coloneqq \col ((\h{y}_{i,p})_\p) \in \R^\n{y} $ of the whole decision variable 
	and to let the agents exchange their estimates with every neighbor over $\g{C}$. In \gls{END} notation, we write this as\footnote{In the following, we  often refer to \eqref{eq:standard} as the  ``standard'' setup, since it is the usually studied scenario. }
	\begin{align}	\label{eq:standard} \tag{C0}
		\E{E} =\P \times \I, \qquad  
		\gD{p} =\g{C} \  (\forall \p).
		\end{align}
This choice of graphs $\g{E}$ and $\{\gD{p}\}_{\p}$ ignores the possible structure in \eqref{eq:fipartiallyseparable}. Note that
  agent $i$ only needs  $(y_p)_{p\in \neig{I}{i}}$ to evaluate (the gradient of) its local cost $f_i$. Therefore, storing an estimate of the whole vector $y$ could be unnecessary and  inefficient -- especially if  $\g{I}$ is sparse and  $P$ is large. Are smarter choices of  $\g{E}$ and $\{\gD{p}\}_{\p}$  possible?
  
  On a side note, we mention that different partitions for the variable $y$ are possible; for example, in the case of the game in \eqref{eq:gameintro}, one natural choice is to set  $\mc{P}= \mc{I}$ and $y_p = x_p$ (the local action of agent $p$), for which \eqref{eq:fipartiallyseparable} retrieves exactly \eqref{eq:partiallycoupledcost}.
	\hfill $\square$
\end{example}
\color{black}

\subsection{Flexible design}\label{subsec:design}

From an algorithm deployment perspective, the  the graphs $\g{C}$ and $\g{I}$ shall be considered  fixed a priori. In contrast,  the graphs $\g{E}$ and $\{\gD{p} \}_{\p}$ are a design choice, which determines how the estimates of certain variables are actually exchanged during the execution of a distributed algorithm. 
 Informally speaking, one wishes to design the graphs $\g{E}$ and $\{\gD{p} \}_{\p}$ so that it is possible to \emph{distributedly} and \emph{efficiently} solve a given decision problem. Mathematically, this translates to imposing  extra structure on the estimate and \design{} graphs.

\smallskip
\begin{pb}\label{prob}
	Given the communication graph $\g{C}$ and the interference graph $\g{I}$, design the estimate graph $\g{E}$ and the design graphs $\{\gD{p} \}_{\p}$ such that:\nopagebreak
	\begin{thmlist}
		\item \label{prob:1} $\g{I}\subseteq\g{E}$;
		\item  \label{prob:2} $\gD{p}\subseteq \g{C}$, for all $\p$;
		\item  \label{prob:3} ``additional requirements'' on $\g{E}$, $\{\gD{p} \}_{\p}$ are met. ~\hfill  $\square$
	\end{thmlist}
\end{pb}
\smallskip

\setlength{\belowcaptionskip}{-3pt}

\begin{figure}[t]
\begin{subfigure}[a]{\columnwidth}
\centering
\includegraphics[width=0.9\columnwidth]{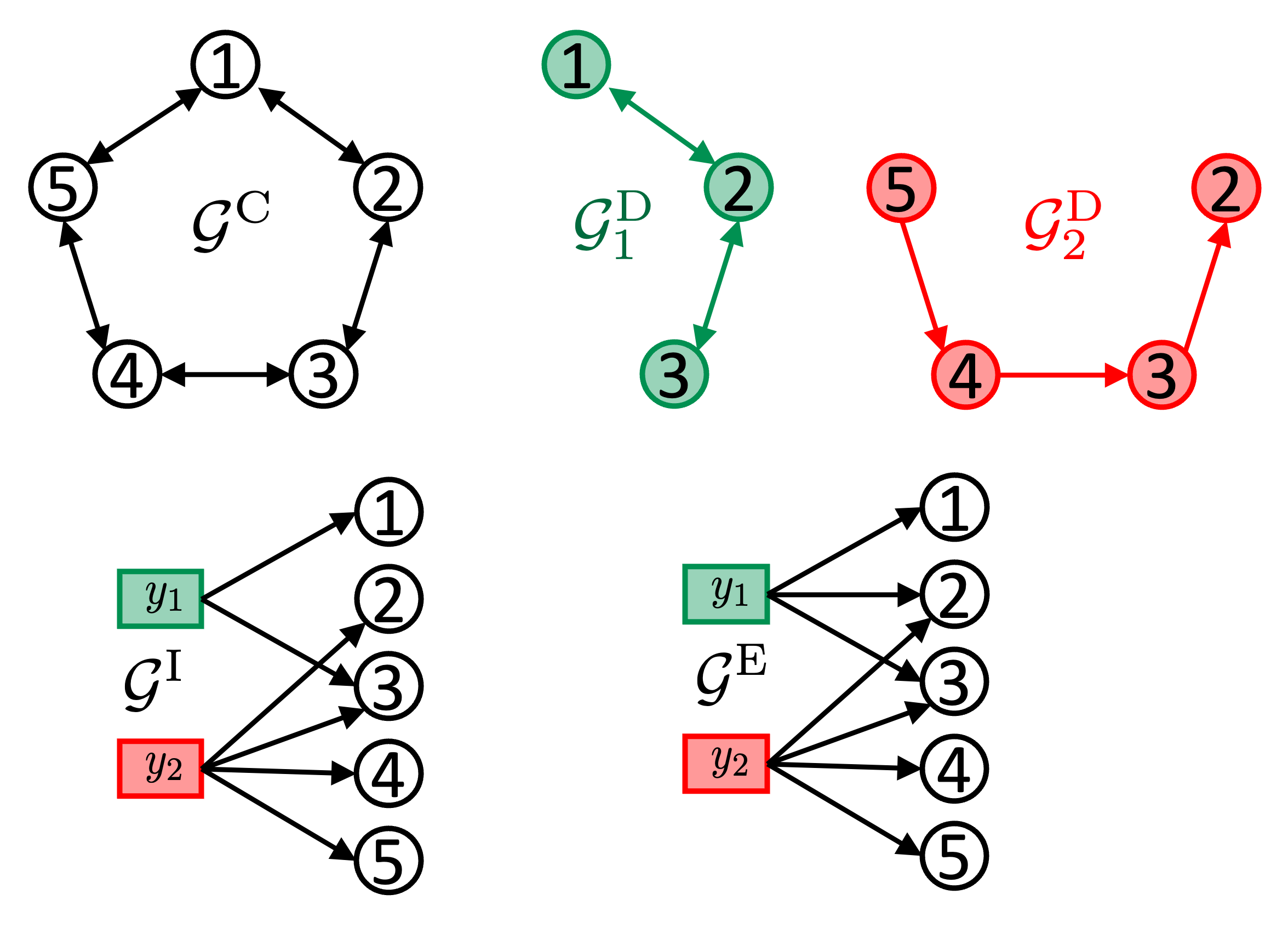}
\caption{}
\label{fig:0:A}\end{subfigure}
\begin{subfigure}[b]{\columnwidth}
\centering
\includegraphics[width=0.5\columnwidth]{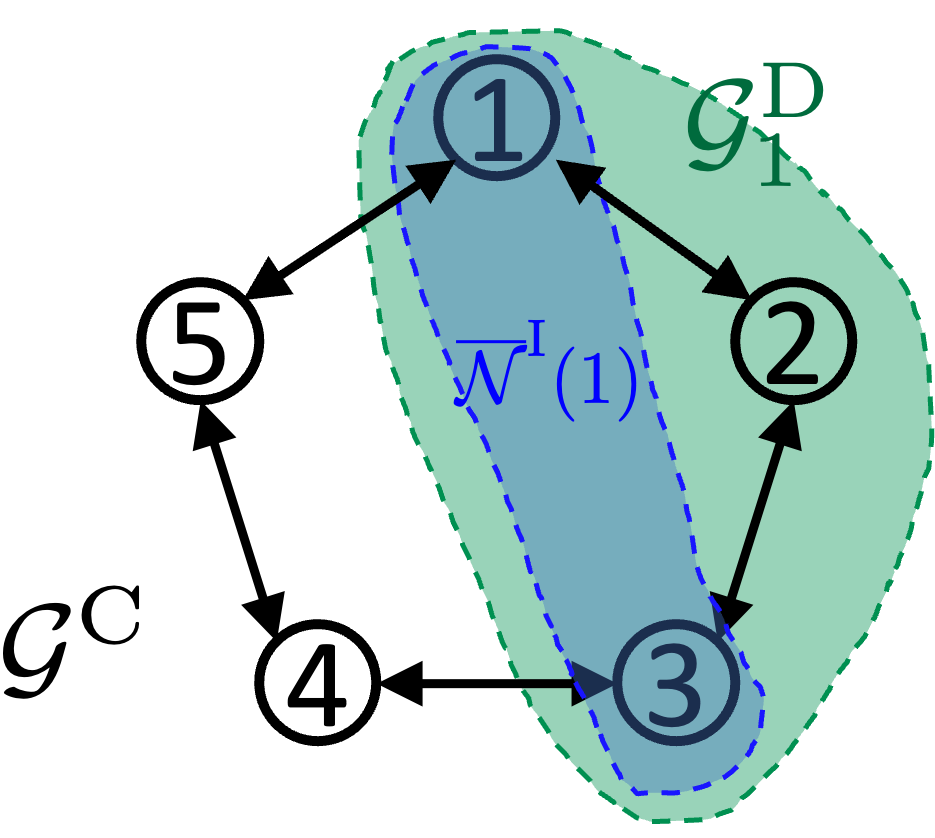}
\caption{}
\label{fig:0:B}
\end{subfigure}
\caption{\label{fig:0}
 (a) A simple example of \gls{END} design. On the left, the given communication and interference graphs, with $\mc{I} = \{1,2,3,4,5\}$ and $\mc{P} = \{1,2\}$. On the right a possible choice for the design graphs and the corresponding estimate graphs; they solve \cref{prob} with ``\emph{(iii)} $\gD{1}$ is undirected and connected and with minimal number of nodes; $\gD{2}$ is rooted at $5$ and with minimal number of edges''.  
 \newline 
 (b) Let us focus on the design of $\gD{1}$. The goal is to minimize the number of copies of $y_1$ (i.e., the number of nodes in $\gD{1}$), but provided that $\gD{1}$ is connected and the conditions in \cref{prob:2,prob:1} are met (i.e., $\neigo{I}{1} \subseteq \neigo{E}{1}$ and $\gD{1} \subseteq \g{C}$). Note that agent $3$ has to estimate  $y_1$ (\ie $3 \in \neigo{E}{1}$), even though agent $3$ is not directly affected by $y_1$ (\ie $3 \notin \neigo{I}{1}$): otherwise, the information could not travel between nodes $1$ and $2$, which are not communication neighbors. In general, a solution to this design problem can be obtained by solving the Unweighted Steiner Tree problem UST$(\mc{G}^C,\neigo{I}{1})$ (see \cref{sec:background}), for which distributed off-the-shelf algorithms are available \cite{Chalermsook:Distributed:Steiner:2005}. The problem is NP-hard in general, but can be approximated in polynomial time. Sufficient for the existence of a solution is that $\g{C}$ is undirected and connected.
}
\end{figure}

In particular, it must hold that $\g{E}\subseteq{\g{I}}$, namely each agent estimates at least the components of $y$ which are indispensable for local computation (in fact, $\g{I}$ expresses the minimal information necessary for each agent). Moreover, since data transmission can only happen over the communication graph $\g{C}$, it must hold that $\gD{p}\subseteq \g{C}$, for all $\p$.

The ``additional requirements'' in \cref{prob:3} can encode \emph{feasibility} conditions (seen as hard constraints: for instance, some type of connectedness  for the graphs $\{\gD{p} \}_{\p}$ is always needed, to ensure that the agents can reach consensus on their estimates), but also  \emph{efficiency} specifications (treated as soft constraints, e.g, one might aim at reducing  memory allocation by minimizing the  number of copies of each variable used, provided that all  hard constraints are satisfied).

A very simple design example  is presented in \cref{fig:0}. For continuity of presentation, we discuss in details in \cref{app:tutorial} several  instances of \cref{prob}. 

\begin{remark}[On the design cost]
An optimal design of the graphs $\g{E}$ and $\gD{p}$'s can be
computationally demanding.
We emphasize that such design is still part of the tuning of an algorithm, to be subsequently used to solve an underlying decision problem. The question is thus whether the
benefits of an efficient estimate allocation -- in terms of algorithm execution  -- are worth  the additional initial design effort.

For time-varying \cite{Bianchi_tvGNE_CDC2023} and \emph{repeated} problems, like distributed optimal estimation or model predictive control  \cite{Mota:LocalDomains:TAC:2014},  (where the same distributed problem is solved multiple times, but for different values of some parameters/measurements, see also \cref{sec:numerics}),
a careful a priori design can be advantageous, i.e., worth the initial (one-time) computational cost of solving \cref{prob}. Otherwise, especially if $\g{I}$ is very dense, it may be convenient  to settle for a suboptimal,
but readily available choice -- e.g.  \eqref{eq:standard}, which is the standard solution in literature \cite{Scutari_Unified_TSP2021,Pavel_GNE_TAC2020}.
Finally,  in many relevant applications,
the specific problem structure renders the choice of optimal  graphs $\gD{p}$  straightforward, as shown in \cref{subsubsec:straightforward}. \hfill $\square$
\end{remark}
\color{black}



\subsection{Unified analysis}

We emphasize that solving \cref{prob} is not the  goal of this paper. In fact,  the requirements and design procedure for $\g{E}$ and $\{\gD{p}\}_\p$ are vastly problem-dependent. Luckily we do not need to consider a specific structure for the estimate and design graphs, but just assume that they satisfy some properties. 
First, 
we will assume throughout the paper  that the design graphs are chosen to satisfy the specifications in \cref{prob:1,prob:2}, without further mention.
 \begin{standing}[Consistency]\label{asm:consistency}
	It holds that $\g{I}\subseteq \g{E}$ and that $\gD{p}\subseteq \g{C}$  for all $\p$.  \hfill $\square$
\end{standing}

Second, we will simply assume some level of connectedness for the design graphs; for example, we might assume that each $\gD{p}$ is strongly connected. 
This ensures some properties for the estimate exchange, akin to those exploited in the analysis of standard consensus-based algorithms, as exemplified in \cref{lem:rooted,lem:connected} below. In turn, this simple observation allows one to easily generalize the convergence analysis of  several  distributed algorithms to the \gls{END} framework, allowing for greater  freedom in the estimates exchange design.

However, before going deeper into details, we need to introduce the stacked notation used in the remainder. This notation is particularly important in simplifying our analysis, as it allows us to  abstract from the complex network interaction and to seamlessly cope with  the non-homogeneity of the agents’ copies (e.g. the local vectors kept by distinct agents may have different dimensions/number of components).
\color{black}



\subsection{\gls{END} Notation}\label{subsec:notation}
For all $\p$, let $\Ntilde{p}\coloneqq\left| \neigo{E}{p} \right|$ be the overall number of copies of $y_p$ kept by all the agents. We define:
\begin{align}
	\hy_p & \coloneqq \col ( ( \h{y}_{i,p} )_{i\in\neigo{E}{p}} )\in \R^{  \Ntilde{p} n_{y_p}}, \quad \forall \p;
	\\
	\hy   & \coloneqq\col( ( {\h{y}}_p )_{\p} ) \in \R^\n{\hy},
\end{align}
where we recall that $\hy_{i,p}$ is the estimate of the quantity $\y_p$ kept by agent $i$; $\n{\hy}\coloneqq\sum_{\p} \Ntilde{p} \n{y_p}$.  Note that $\hy_p$ collects all the copies of $y_p$, kept by different agents. 

We  denote 
\begin{align}
    \WDbs &\coloneqq \diag ( ( \WD{p}\otimes \textrm{I}_{\n{y{_p}}})_{\p} ) 
    \\
    \LDbs &\coloneqq \diag ( ( \LD{p}\otimes \textrm{I}_{\n{y{_p}}})_{\p} ),
\end{align} where $ \WD{p}$ is the weight matrix of $\gD{p}$, and $\LD{p}$ is its Laplacian. For all $\p$, let
\begin{align}
	\Ebs{p} &\coloneqq\{\hy_p \in \R^{N_p \n{y_p}} \mid \hy_p = \1_{N_p} \otimes v, v\in\R^{\n{y_p}} \},
 \\
 \Ebs{} &\coloneqq \textstyle \prod_{\p} \Ebs{p}
\end{align}
be the consensus space for $\hy_p $, namely the subspace where all the estimates of $\y_p$ are equal, and 
the overall consensus space, respectively. Given $y=\col(( \y_p)_{\p} )\in \R^\n{y}$, we also define 
\begin{align}\label{eq:Cconsensus}
    \Ebs{}(\y)\coloneqq\col(( \1_{N_p} \otimes \y_p )_{\p} ) \in \Ebs{}.
\end{align}
We denote by $\Ebsperp{}$ the complementary subspace of $\Ebs{}$; by $\Piparallel{} \coloneqq \diag ( (
\1_{Np} \1_{N_p}^\top \otimes I_{\n{y_p}} / N_p )_{\p} )$ and $\Piperp{}\coloneqq\id-\Piparallel{}$  the projection matrices onto $\Ebs{} $ and $\Ebsperp{}$, respectively.

For each $\p$, for each $i\in \neigo{E}{p}$, we denote by \begin{align}\label{eq:ip}
    i_p\coloneqq \textstyle \sum_{j\in \neigo{E}{p}, \, j\leq i} 1
\end{align} the  position of $i$ in the ordered set of nodes  $\neigo{E}{p}$. For all $i \in \neigo{E}{p}$, we denote by $\Rmc_{i,p}\in \R^{\n{y_p} \times {N_p \n{y_p}}}$ the matrix that selects $\hy_{i,p}$ from $\hy_p$, \ie  $\hy_{i,p}= \Rmc_{i,p} \hy_p$.

Sometimes it is useful to define agent-wise quantities, which we indicate with a tilde. Let
\begin{align}
	\hyt_i &\coloneqq  \col ( ( \hy_{i,p} )_{p\in \neig{E}{i}} ), \quad \forall \i;
	\\
	\hyt &\coloneqq\col( ( \hyt_i )_{\i} ) \in\R^{\n{\hy}},
\end{align}
where $\hyt_i$ collects all the estimates kept by agent $i$. Let $\perm \in \R^{\n{\hy}\times \n{\hy}}$ be the permutation matrix such that $\perm \hy= \hyt$; the graph structures corresponding to $\hyt{}$ can be defined via permutations, e.g.,  $\WDtbs\coloneqq\perm \WDbs \perm^\top$,  $\LDtbs\coloneqq\perm \LDbs \perm^\top$, $\Etbs{} \coloneqq\perm \Ebs{}$.

The following lemmas easily follow by stacking over $\p$ well-known graph-theoretic properties.

\begin{lemma}\label{lem:rooted}
	Assume that, for each $\p$, there exists a root $r_p \in \mc{I}$ such that $\gD{p}$ is rooted at $r_p$ . Then, $\Null(\LDbs) =\Ebs{} $. Moreover, $\Null(\LDtbs)=\Etbs{} $.  \hfill $\square$
\end{lemma}


\begin{lemma}\label{lem:connected}
	Assume that $\gD{p}$ is strongly connected and that $\WD{p}$ is balanced, for all $\p$. Then, for any $\hy \in\R^{\n{\hy}}$, 
    $
	\langle \hy, \LDbs \hy \rangle \geq \frac{\bar{\uplambda}}{2} \| (\id -\Piparallel{} ) \ybs \|^2,
	$
	where $\bar{\uplambda}\coloneqq\min_{\p}\{ \uplambda _2 ({\LD{p}}^\top+\LD{p}) \} >0$. \hfill $\blacksquare$
\end{lemma}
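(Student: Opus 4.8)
The plan is to exploit the block-diagonal structure of $\LDbs$ to reduce the claim to a single per-component estimate, and then to recognize the symmetrized design Laplacian as the Laplacian of a connected undirected graph. Since $\LDbs=\diag\big((\LD{p}\otimes\id_{\n{y_p}})_{\p}\big)$, the consensus space factorizes as $\Ebs{}=\prod_{\p}\Ebs{p}$, and $\Piparallel{}$ is block diagonal with $p$-th block $\Pi_p\coloneqq(\1_{N_p}\1_{N_p}^\top\otimes\id_{\n{y_p}})/N_p$ (the orthogonal projector onto $\Ebs{p}$). Consequently both $\langle\hy,\LDbs\hy\rangle=\sum_\p\langle\hy_p,(\LD{p}\otimes\id_{\n{y_p}})\hy_p\rangle$ and $\|(\id-\Piparallel{})\hy\|^2=\sum_\p\|(\id-\Pi_p)\hy_p\|^2$ split over $\p$, so it suffices to prove, for each fixed $\p$,
\[
\langle\hy_p,(\LD{p}\otimes\id_{\n{y_p}})\hy_p\rangle\ \ge\ \tfrac12\,\uplambda_2\big(\LD{p}^\top+\LD{p}\big)\,\|(\id-\Pi_p)\hy_p\|^2 ,
\]
and then to sum and bound each $\uplambda_2(\LD{p}^\top+\LD{p})$ below by $\bar\uplambda$.

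For a fixed $\p$, the first step is to symmetrize the quadratic form: using $x^\top Ax=\tfrac12 x^\top(A+A^\top)x$ and $(\LD{p}\otimes\id)^\top=\LD{p}^\top\otimes\id$,
\[
\langle\hy_p,(\LD{p}\otimes\id_{\n{y_p}})\hy_p\rangle=\tfrac12\,\hy_p^\top\big((\LD{p}^\top+\LD{p})\otimes\id_{\n{y_p}}\big)\hy_p .
\]
The crucial step is to identify $M_p\coloneqq\LD{p}^\top+\LD{p}$ as an undirected graph Laplacian. Writing $\LD{p}=\mat D_p-\WD{p}$ with $\mat D_p$ the diagonal in-degree matrix (so $\LD{p}\1_{N_p}=\0$), balancedness of $\WD{p}$ means its column sums equal its row sums, i.e.\ $\1_{N_p}^\top\LD{p}=\0^\top$; hence $M_p=2\mat D_p-(\WD{p}+\WD{p}^\top)$, where $2\mat D_p$ is exactly the degree matrix of the symmetric nonnegative weights $\WD{p}+\WD{p}^\top$. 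Thus $M_p$ is the Laplacian of the undirected graph obtained by symmetrizing $\gD{p}$; since $\gD{p}$ is strongly connected by \cref{asm:stronglyconnected}, this undirected graph is connected, so $M_p\succeq0$ has a simple zero eigenvalue with kernel $\Span(\1_{N_p})$ and $\uplambda_2(M_p)>0$.

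It then remains to lift the standard scalar estimate through the Kronecker product. The eigenvalues of $M_p\otimes\id_{\n{y_p}}$ are those of $M_p$, each with multiplicity $\n{y_p}$; its kernel is $\Span(\1_{N_p})\otimes\R^{\n{y_p}}=\Ebs{p}$, with orthogonal projector $\Pi_p$. Applying Courant--Fischer on $\ran(\id-\Pi_p)=\Ebs{p}^\perp$, the smallest nonzero eigenvalue $\uplambda_2(M_p)$ controls the form, giving $\hy_p^\top(M_p\otimes\id_{\n{y_p}})\hy_p\ge\uplambda_2(M_p)\|(\id-\Pi_p)\hy_p\|^2$; together with the symmetrization identity this is the desired per-block bound. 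Summing over the finite set $\P$ and using $\uplambda_2(M_p)\ge\bar\uplambda>0$ closes the argument. The only nontrivial step is the identification of $M_p$ as a bona fide Laplacian: it is precisely balancedness that makes $2\mat D_p$ the correct degree matrix, so that $\LD{p}^\top+\LD{p}$ is positive semidefinite, and strong connectivity that forces $\uplambda_2(M_p)>0$. Without either hypothesis the symmetrized matrix need not be sign-definite on $\Ebs{p}^\perp$ and the estimate can fail; everything else is the routine spectral bound for a symmetric PSD matrix, transported across the Kronecker factor.
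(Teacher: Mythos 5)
Your proof is correct and follows exactly the route the paper intends: the paper dispenses with a detailed argument, stating only that the lemma ``follows by stacking over $\p$ well-known graph-theoretic properties,'' and your write-up is precisely that stacking --- per-block symmetrization $\langle \hy_p, (\LD{p}\otimes \id)\hy_p\rangle = \tfrac12 \hy_p^\top((\LD{p}^\top+\LD{p})\otimes \id)\hy_p$, the identification (via balancedness) of $\LD{p}^\top+\LD{p}$ as the Laplacian of the connected symmetrized graph, and the Courant--Fischer bound lifted through the Kronecker factor. In particular, you correctly isolate where each hypothesis enters (balancedness for $2\mat D_p$ being the right degree matrix, strong connectivity for $\uplambda_2>0$), which is the substance of the ``well-known property'' the paper invokes.
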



\section{Generalized Nash equilibrium seeking}\label{sec:GNE}
In this section we consider  \gls{GNE} problems. In particular, each agent $\i$ is equipped with a private cost function $f_i(x_i,x_{-i})$, $f_i:\R^{\n{x_i}}\times \R^\n{x_{-i}} \rightarrow  {\R}$,
$
$
which depends  both on its local  action (decision variable) $x_i \in \R^{\n{x_i}}$ and on the actions of the other agents $x_{-i}\coloneqq \col((x_j)_{j\in\I \backslash \{ i \} } ) \in \R^{\n{x_{-i}}}$. Each agent chooses its action in a local feasible set $\Omega_i \subseteq \R^\n{x_i}$;
let  $x \coloneqq \col( (x_i)_{\i})  \in \Omega $ be the overall action, with  $ \Omega \coloneqq \prod_{\i} \Omega_i \subseteq \R^\n{x}$. The agents' decisions are also coupled via shared constraints: specifically, the overall feasible set is $\X{}  \coloneqq \Omega \cap \{ x\in \R^{\n{x}}\mid \mat Ax \leq a\}$, where $\mat A \in \R^{\n{\lambda}\times \n{x}}$, $a\in \R^{\n{\lambda}}$. We call \emph{generalized game} the following set of interdependent optimization problems:
\begin{align}\label{eq:game}
	(\forall \i ) \ \minimize{x_i \in \R^{\n{x_i}}} \ f_i(x_i,x_{-i}) \ \text{s.t. } (x_i,x_{-i}) \in \X.
\end{align}
The   goal is to distributedly compute a \gls{GNE}, a set of decisions   simultaneously solving all the problems in \eqref{eq:game}. \begin{definition}
    A \gls{GNE} is an  $N$-tuple $x^\star =\col( (x_i^\star)_{\i}) \in \X{} $ such that, for all $\i$,   $f_i(x_i^\star, x_{-i}^\star) \leq \inf_{x_i} \{ f_i(x_i , 	x_{-i}^\star) \mid  (x_i, x_{-i}^\star) \in \X \}$. \hfill $\square$
\end{definition}  Let us define the \emph{pseudo-gradient} operator $\op F:\R^{\n{x}} \rightarrow \R^{\n{x}}$,
	\begin{align}\label{eq:pseudo-gradient}
		\op{F}(x)\coloneqq \col(( \grad{x_i} f_i(x_i,x_{-i}) )_{\i}).
	\end{align}
We restrict our attention to convex  
 and strongly monotone games; the following are standard conditions for \gls{GNE} seeking over graphs \cite[Asm.~1]{KoshalNedicShanbhag_Aggregative_OR2016}, \cite[Asm.~1-2]{TatarenkoShiNedic_Geometric_TAC2021}, \cite[Asm.~1-2]{Pavel_GNE_TAC2020}.
\begin{assumption}
\label{asm:game_convexity}
	For all $i\in \mathcal{I}$, $\Omega_i$ is closed and convex,
    $f_{i} $ is continuous and
	$f_{i}(\cdot, x_{-i})$ is convex
    and differentiable	for every $x_{-i}$; $\mc{X}$ is non-empty
	and satisfies Slater's constraint qualification.
	{\hfill $\square$}
\end{assumption}
\begin{assumption}\label{asm:strong_mon}
	The pseudo-gradient $\op F$ in \eqref{eq:pseudo-gradient} is $\mu$-strongly monotone and $\theta$-Lipschitz continuous, for some $\mu,\theta>0$.  \hfill $\square$
\end{assumption}

As per standard practice, we only focus on \glspl{v-GNE}, namely \glspl{GNE} with identical dual variables, which are computationally tractable and  economically more justifiable \cite{FacchineiKanzow_Generalized_4OR2007}. 
Under \cref{asm:game_convexity,asm:strong_mon}, there is a unique \gls{v-GNE} \cite[Th.~2.3.3]{FacchineiPang_VIs_Springer2004}; moreover $x^\star$ is the \gls{v-GNE} of the game in \eqref{eq:game} if and only if  there exists a  dual variables $\lambda^\star \in \R^{n_\lambda}$  satisfying the  following \gls{KKT} conditions \cite[Th.~4.8] {FacchineiKanzow_Generalized_4OR2007} (we recall that $\nc_{\mc{S}}$ denotes the normal cone of a set $\mc{S}$):
\begin{subequations}
\label{eq:KKT}
\begin{align} \label{eq:KKTa}
	\0_{n_{x}} & \in  \op F(x^\star) +  \nc_{\Omega}(x^\star) +\mat A^\top \lambda^\star , 
 \\
 \label{eq:KKTb}
  \0_{n_{\lambda}}  & \in  \mat -(Ax^\star -a) + \nc_{\R^\n{\lambda}_{\geq 0}}.
\end{align}
\end{subequations}


Finally, we  consider the so-called partial-decision information scenario,
arising in applications without a central coordinator, such as radio networks \cite{Wang_CognitiveRadio_2010} and sensor positioning \cite{Romano_Penalty_TCNS2023}. In particular, each agent  $i$
only relies  on the data received locally from some neighbors over a communication network $\g{C}=(\I,\E{C})$. To cope with the limited information, the solution usually explored in the literature  is to embed each agent with an estimate of the whole vector $x$  \cite{YeHu_Consensus_TAC2017}, \cite{TatarenkoShiNedic_Geometric_TAC2021}, and possibly a copy of a dual variable  \cite{Pavel_GNE_TAC2020}. Critically, this approach fails to exploit the possible sparsity in the cost and constraint coupling. We remedy in the remainder of this section. 

\subsection{\gls{END} pseudo-gradient dynamics for NE seeking }\label{subsec:NEalgorithms}

We first consider games without coupling constraints (i.e., $\mc{X}=\Omega)$: then, the notion of \gls{v-GNE} boils down to that of a \gls{NE}. 
We describe the cost coupling  via an interference graph $\g{I} =(\I,\I, \E{I}) $,   where  $(p,i) \in \E{I}$ if and only if  $f_i$ depends on $x_p$, for all $i \neq p$, and $(i,i)\in  \E{I}$ for all $\i$; we also write
\begin{align*}
	f_i((x_p)_{p\in\neig{I}{i}})\coloneqq f_i(x_i,x_{-i}).
\end{align*}

Hence, in this subsection we choose  the variable of interest for the \gls{END} framework to be the overall action, i.e. $y=x$; $\P=\I$ and $y_i=x_i$ for all $\i$ (finer partitions are  also possible). Assume that an estimate graph $\g{E}$ and design graphs $\g{D}$'s are chosen according to \cref{asm:consistency}. Then, each agent $i$ keeps and sends the copies $\{ \hy_{i,p}, p\in \neig{E}{i} \}$, estimating the actions of a \emph{subset} of the other agents.  Since the action $x_i$ is actually a local variable, under the control of agent $i$, we formally define $\hy_{i,i}\coloneqq x_i$ (i.e., agent $i$'s estimate of its own action coincides with the \emph{real} value). We study the following iteration (we recall the notation in \cref{subsec:notation}): each agent $\i$ performs
	\begin{align}\label{eq:proxgrad}
	    \nonumber
		\hat\hy_{i,p}^{k }  \coloneqq{} &
		\textstyle \sum_{j \in \neigD{p}{i}}	[\WD{p}]_{i_p,j_p} \hy_{i,p}^{k}   
		 \quad  \
		(\forall p \in \neig{E}{i})
		\\
		\nonumber
		\hy_{i,p}^{k+1} ={} & \hat \hy_{i,p}^{k} 
		\hphantom{\textstyle \sum_{j \in \neigo{D}{p }{i}}	[\WD{p}]_{i_p,j_p} }
		 \quad  \  (\forall p \in \neig{E}{i} \backslash \{i\})
		\\
		\hy_{i,i}^{k+1}  ={} & \proj_{\Omega_i}  \left ( \hat\hy_{i,i}^{k} - \alpha \grad{x_i} f_i ( (\hat\hy_{i,p}^k)_{p\in  \neig{I}{i} }) \right).
	\end{align}
	In \eqref{eq:proxgrad}, the estimates of the agents are updated according to a consensus protocol, with an extra (projected) gradient step for the own estimate $\hy_{i,i}$. The algorithm boils down to   \cite[Alg.~1]{Bianchi_Timevarying_LCSS2021} if $\WD{i}= \W{C}$ for all $\i$, (\ie the standard setup in \eqref{eq:standard}). Since $\gD{i}\subseteq \g{C}$, the algorithm is distributed.
Note  that the local gradient $\grad{x_i}f_i$ is computed on the local estimates kept by agent $i$, not on the real action $x^k = \col(( \hy_{i,i}^k )_{\i} )$.  We define the \emph{extended pseudo-gradient mapping}
\begin{align}\label{eq:extended-pseudo-gradient}
	\Fbs (\hy ) \coloneqq \col(( \grad{x_i} f_i( (\hy_{i,p})_{p\in  \neig{I}{i} })  )_{\i}),
\end{align}
$\Rmc\coloneqq\diag((\Rmc_{i,i} )_{\i})$, with $\Rmc_{i,i}$ as in \cref{subsec:notation} (i.e., $\Rmc\hy = y $), and  $\bs{\Omega} \coloneqq \{ \hy \mid \Rmc \hy \in \Omega \}$.
Then, \eqref{eq:proxgrad}  can be written in stacked form in one line:
\begin{align}\label{eq:proxgrad_compact}
	\hy^{k+1} = \proj_{\bs{\Omega}} \left (  \WDbs \hy^k - \alpha  \Rmc ^\top \Fbs (   \WDbs \hy^k) \right).
\end{align}
We now postulate additional conditions on the design graphs.
\begin{assumption}\label{asm:row_spanning}
	For each $\i$, $\gD{i}$ is rooted at $i$ and $\WD{i}\1_{N_i}=\1_{N_i}$; we denote by $\qD{i}\in \R^{N_i}$ the unique nonnegative vector such that ${\qD{i}}^\top \WD{i} ={\qD{i} } ^\top$, $\1^\top_{N_i} \qD{i}=1$.  \hfill $\square$
\end{assumption}
 \cref{asm:row_spanning} is very mild:
rootedness is necessary for the consensus of the estimates;
row-stochasticity  is immediately satisfiable whenever the agents have access to their own in-degree. One major technical complication -- with  respect to the usual, more restrictive strongly connectedness assumption -- is that the (Perron) eigenvectors $\qD{i} $'s might have zero elements.
In addition, we require one technical condition.
\begin{assumption}\label{asm:diagonal_con}
	For all $\i$, there is a  matrix $\mat{Q}_i \succ 0$ such that $\sigma_i \coloneqq\|\WD{i} - \1_{N_i} {\qD{i}} ^\top \|_{\mat{Q}_i} <1$,  $[\1_{N_i}^\top \mat Q_i ]_{i_i} = 1$, $\ 1_{N_i}^\top \mat{Q}_i  \WD{i}(\id_{N_i} -\1_{N_i}{\qD{i}}^\top) = \0_{N_i}^\top $,   and either
	\emph{(i)} $\mat Q_i$ is diagonal, or \emph{(ii)} $\Omega_i = \R^{\n{x_i}}$. \hfill $\square$
\end{assumption}
\begin{remark}\label{rem:asm6isweak} 
	 \cref{asm:diagonal_con}\emph{(i)} alone is general enough to comprise all the cases considered in the existing literature (corresponding to the choice $\gD{p} =\g{C}$ in \eqref{eq:standard}):
	\begin{enumerate}[leftmargin = *]
		\item[ \it i.] if $\gD{i}$ is strongly connected with self-loops, then \cref{asm:diagonal_con}\emph{(i)} holds with $\mat{Q}_i=\diag(q_i/[q_i]_{i_i} )$ \cite[Lem.~1]{Bianchi_Directed_CDC2020}; in particular, if  $\WD{i}$ is doubly stochastic, $\mat{Q}_i= \id$. This also means that, if $\g{C}$ is strongly connected, one can always choose the design graphs to satisfy \cref{asm:diagonal_con,asm:row_spanning};
		\item[\it ii.] if  $\gD{i}$ is the directed star graph (namely, there are all and only the  edges from node $i$ to every node in $\neigo{E}{i}$), then \cref{asm:diagonal_con}\emph{(i)} holds with $\mat{Q}_i=  \id $ (and $\sigma_i =0$, $q_i$ with only one nonzero element $[q_i]_{i_i}=1$); note that having this structure for all $\i$ correspond to the classical full-information scenario, as detailed  below. 
	\end{enumerate} 
   Other relevant cases, satisfying \cref{asm:diagonal_con} but never addressed in literature, are discussed after the next result.  \hfill $\square$

\end{remark}
\begin{theorem}\label{th:NE}
	Let \cref{asm:game_convexity,asm:strong_mon,asm:row_spanning,asm:diagonal_con} hold, and let
	\begin{align*}
		\Xi & \coloneqq  \diag  (( \mat{Q}_i \otimes  \id_{\n{x_i}} )_{\i}) \qquad 	\bar \sigma \coloneqq \textstyle  \max_{\i} \{ \sigma_i \}
		\\
		\bar{\theta} & : = \theta  \sqrt{ \textstyle \max_{\i}\{ [ \mat Q_i]_{i_i,i_i} \} / 
		{\eigmin(\Xi)}}
		\\
		\underline \gamma & \coloneqq  \textstyle \sqrt{ 1/\max_{\i}\{ \1^\top \mat{Q}_i \1 \}}, \quad  \bar \gamma\coloneqq \sqrt{ 1/\textstyle \min_{\i} \{ \1^\top \mat{Q}_i \1 \}}
		\\[1em]
	    \mat 	M_\alpha  & \coloneqq
		\begin{bmatrix}
			1 -2\alpha \mu \underline \gamma^2 +\alpha^2\theta^2 \bar \gamma^2
			&
			\bar \sigma (\alpha(\bar\theta+\theta \bar \gamma)+\alpha^2\bar\theta\theta\bar \gamma)
			\\[1em]
			\bar \sigma (\alpha(\bar\theta+\theta \bar \gamma)+\alpha^2\bar\theta\theta\bar \gamma)
			&
			\bar \sigma^2 (1+ 2\alpha\bar \theta +\alpha^2 \bar \theta^2 )
		\end{bmatrix}
	\end{align*}
	Let  $\alpha>0$ be chosen such that
	\begin{align}\label{eq:rhoalphaleq1}
		\rho_\alpha=\eigmax(\mat M_\alpha) <1.
	\end{align}
	Then the sequence $(\hy^k)_{\k}$ generated by \eqref{eq:proxgrad_compact} converges linearly to $\hy^\star \coloneqq \Ebs{} (x^\star )$, where $x^\star$ is the \gls{NE} of the game \eqref{eq:game}: for all $\k$,
	$
		\| \hy^{k+1}  -   \hy ^\star\|^2_{\Xi}  \leq {\rho_\alpha} \|\hy^{k}  -   \hy^\star \|_{\Xi}^2.
	$ \hfill $\square$
\end{theorem}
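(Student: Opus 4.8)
The plan is to read the stacked iteration \eqref{eq:proxgrad_compact} as a projected forward step and to prove a coupled linear contraction in the $\Xi$-norm between two scalar error quantities: the distance of the (Perron-)averaged estimates to the \gls{NE}, and the disagreement among the copies. First I would check that $\hy^\star=\Ebs{}(x^\star)$ is a fixed point of \eqref{eq:proxgrad_compact}. Since $\WD{i}\1_{N_i}=\1_{N_i}$ by \cref{asm:row_spanning}, we have $\WDbs\hy^\star=\hy^\star$; at consensus the extended pseudo-gradient collapses to the true one, $\Fbs(\hy^\star)=\op F(x^\star)$; and the own-coordinate equation $x_i^\star=\proj_{\Omega_i}(x_i^\star-\alpha[\op F(x^\star)]_i)$ is precisely the variational characterization of the \gls{NE}, which exists and is unique under \cref{asm:game_convexity,asm:strong_mon}, while the remaining blocks are left invariant by the averaging. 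Thus $\hy^\star$ is a fixed point, and bounding $\|\hy^{k+1}-\hy^\star\|_\Xi$ is the whole task.

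Next I would establish that $\proj_{\bs\Omega}$ is nonexpansive in $\|\cdot\|_\Xi$, which is exactly where \cref{asm:diagonal_con} is needed. In case \emph{(ii)} the set $\bs\Omega$ is the whole space and the projection is the identity, hence trivially $\Xi$-nonexpansive; in case \emph{(i)} the matrix $\Xi$ is block diagonal and the normalization $[\1_{N_i}^\top\mat Q_i]_{i_i}=1$ pins the weight on the own coordinate $\hy_{i,i}=x_i$ to one, so that the Euclidean projection onto $\Omega_i$ and the $\Xi$-weighted projection coincide on that block. Writing $T(\hy)\coloneqq\WDbs\hy-\alpha\Rmc^\top\Fbs(\WDbs\hy)$ and using the fixed-point identity together with $\Xi$-nonexpansiveness gives $\|\hy^{k+1}-\hy^\star\|_\Xi\le\|T(\hy^k)-T(\hy^\star)\|_\Xi$, so it remains to control the latter.

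I would then split $\hy^k-\hy^\star$ into a consensus part and a disagreement part that are \emph{orthogonal in the $\Xi$-inner product}; the existence of this splitting and the Pythagorean identity $\|\hy^k-\hy^\star\|_\Xi^2=s_1^2+s_2^2$ are furnished by the two remaining algebraic conditions in \cref{asm:diagonal_con} together with the consensus characterization of \cref{lem:rooted}. On the consensus direction, $T$ acts as an essentially centralized projected forward step, so the $\mu$-strong monotonicity and $\theta$-Lipschitzness of $\op F$ (\cref{asm:strong_mon}) give the scalar contraction behind the $(1,1)$ entry $1-2\alpha\mu\underline\gamma^2+\alpha^2\theta^2\bar\gamma^2$ of $\mat M_\alpha$, the constants $\underline\gamma,\bar\gamma$ arising from the equivalence between the $\Xi$-norm of a consensus point and the Euclidean norm of its underlying value; the mismatch from evaluating $\Fbs$ at the disagreeing estimates $\WDbs\hy^k$ rather than at consensus is $\theta$-Lipschitz in $s_2$ and produces the off-diagonal coupling. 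On the disagreement direction, the block-diagonal operator $\WDbs-\Pi_q$, with $\Pi_q\coloneqq\diag((\1_{N_i}{\qD{i}}^\top\otimes\id_{\n{x_i}})_i)$, contracts by $\bar\sigma$ in the $\Xi$-norm (since $\sigma_i=\|\WD{i}-\1_{N_i}{\qD{i}}^\top\|_{\mat Q_i}<1$ by \cref{asm:diagonal_con}), while the gradient perturbation inflates it by $(1+\alpha\bar\theta)$, giving the $(2,2)$ entry $\bar\sigma^2(1+\alpha\bar\theta)^2$. Squaring and collecting the two scalar bounds, the pair $(s_1^k,s_2^k)$ obeys $(s_1^{k+1})^2+(s_2^{k+1})^2\le\eigmax(\mat M_\alpha)\big((s_1^k)^2+(s_2^k)^2\big)$, where $\mat M_\alpha$ is the symmetric matrix built from the (cross-)products of the coefficients of the two bounds; since $\|\hy^k-\hy^\star\|_\Xi^2=(s_1^k)^2+(s_2^k)^2$, this is exactly the claimed rate $\rho_\alpha=\eigmax(\mat M_\alpha)$ under \eqref{eq:rhoalphaleq1}.

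The main obstacle is the row-stochastic (rather than doubly stochastic) nature of the weights: because the Perron vectors $\qD{i}$ may have zero entries, the naive Euclidean consensus splitting is neither invariant under $\WDbs$ nor does it decouple the gradient step from the disagreement, so the clean two-component recursion fails in the standard metric. The entire purpose of the tailored inner product $\mat Q_i$ in \cref{asm:diagonal_con} is to supply a single metric in which, simultaneously, (a) the averaging is a genuine $\bar\sigma$-contraction on the disagreement subspace, (b) that subspace is $\Xi$-orthogonal to the consensus subspace, and (c) the projection $\proj_{\bs\Omega}$ stays nonexpansive. Verifying that these three properties hold at once, and that the resulting cross-terms assemble exactly into the symmetric $\mat M_\alpha$ displayed in the statement, is the technical crux of the argument.
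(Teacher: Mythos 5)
Your proposal follows essentially the same route as the paper's own proof: the paper's \cref{lem:NEcrux} establishes exactly your claimed $\sqrt{\rho_\alpha}$-Lipschitz bound on the forward map $\Fmc(\hy)=\WDbs \hy - \alpha \Rmc^\top \Fbs(\WDbs\hy)$ in the $\Xi$-norm, via the same Perron-weighted consensus/disagreement splitting, the $\Xi$-orthogonality supplied by \cref{asm:diagonal_con}, the $\bar\sigma$-contraction of the disagreement component, and the strong-monotonicity/Lipschitz estimates on the consensus component assembling into $\mat M_\alpha$; the theorem then follows, as you indicate, because $\proj_{\bs{\Omega}}$ coincides with the $\Xi$-weighted projection (hence is $\Xi$-nonexpansive) under \cref{asm:diagonal_con}. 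The only cosmetic discrepancy is that the paper attributes this coincidence of projections to the diagonality of $\mat Q_i$ together with the rectangular structure of $\bs{\Omega}$ (the weight on the constrained block being a positive scalar multiple of the identity), rather than to the normalization $[\1^\top \mat Q_i]_{i_i}=1$, which is instead used in calibrating the monotonicity and Lipschitz constants.
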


\smallskip
\begin{proof}
    See \cref{proof:NE}.
\end{proof}

\vspace{1em}
The condition \eqref{eq:rhoalphaleq1} always holds for $\alpha $  small enough (explicit bounds are obtained as in \cite{Bianchi_Timevarying_LCSS2021}). Let us now highlight some of the novelties of \cref{th:NE}:
\begin{enumerate}[leftmargin=*]
	\item[(a)] Consider the standard scenario in \eqref{eq:standard}, where agents store and exchange an estimate of the whole $x$. If $\g{C}$ is strongly connected and $\W{C}$ is doubly stochastic, \cref{th:NE} retrieves exactly \cite[Th.~1]{Bianchi_Timevarying_LCSS2021}. If $\W{C}$ is only row stochastic, \cref{th:NE} improves on the results in \cite{Bianchi_Directed_CDC2020} since \eqref{eq:proxgrad_compact} does not require the knowledge of any Perron eigenvector, but just a small-enough step (as in \cite{Nguyen_Directed_arXiv2023}, which however requires a much stronger monotonicity assumption); this is achieved  by using the weight matrix $\Xi$ in the analysis; 
    \item[(b)] To our knowledge, the only other works that consider partial coupling are  \cite{Salehisadaghiani_Nondoubly_EAI2020,Salehisadaghiani_Graphical_AUT2018}. The authors propose gossip algorithms where, assuming
    a  lower bound for the strongly connected graph $\g{C}$ related to $\g{I}$,
    each agent must only estimate the actions that directly affect its cost.\footnote{This is also achieved in \cref{th:NE} by the choice $\gD{i}=( \neigo{I}{i}, \E{C} \cap (\neigo{I}{i}\times \neigo{I}{i} ))$ if all the resulting $\{\gD{i}\}_{\i}$ are strongly connected, which is a much weaker assumption than \cite[Asm.~6]{Salehisadaghiani_Nondoubly_EAI2020}.}  However, this setup  requires that the  cost of each agent only depends on the actions of its  communication neighbors and neighbors' neighbors \cite[Lem.~3]{Salehisadaghiani_Nondoubly_EAI2020}. By  allowing some  agents to  estimate a larger subset of actions (if needed), \cref{th:NE} avoids this limitation. 
	\item[(c)]  \cref{th:NE} also allows for graphs $\gD{i}$'s that are not strongly connected. For instance, if $\{\gD{i}\}_{\i}$ are all star graphs (i.e., \cref{rem:asm6isweak}.{\it ii}), the action  update in \eqref{eq:proxgrad_compact}  is
	\begin{align}\label{eq:fullinfo_pseudo-gradient}
		x_i^{k+1}=\proj_{\Omega_i} (x_i^k -\alpha \nabla_{x_i} f_i (x^k) )
	\end{align}
    which is the standard pseudo-gradient method for the full-information scenario, where  estimates and true values coincide. In particular, when $\g{E}$ is complete, \cref{th:NE} retrieves  the well-known bound $\alpha <  2\mu / \theta ^2$ (since $\bar \sigma=0$, $\underline \gamma^2 =\bar \gamma^2=\frac{1}{N}$) \cite[Prop.~26.16]{BauschkeCombettes_2017}.
\item[(d)] Another (not strongly-connected) case not addressed before is that of a matrix $ \WD{1}=\left[\begin{smallmatrix}
  1~ & 	  \0_{N_i -1}^\top \\    c~ &  \bar W
\end{smallmatrix}\right]$,  $c\in \R_{\geq 0}^{N_i -1}$, representing a leader-follower protocol (with agent $1$ as the leader for ease of notation). If $\Omega_1 = \R^{\n{x_1}}$ and \cref{asm:row_spanning} holds, it can be checked that \cref{asm:diagonal_con} is verified with ${\qD{1}} = e_1 = \col(1,0,\dots,0)$,  $\mat Q_1 = \left[\begin{smallmatrix}
    1+\1^\top \mat X_{22} \1~ & - 1^\top \mat X_{2,2} \\
    - \mat X_{22} \1_{N_i-1}~ & X_{22}
\end{smallmatrix}\right]$,
where $\mat X= \left[ \begin{smallmatrix}
\mat X_{1,1}~ & \mat X_{1,2} \\ \mat \mat X_{1,2}^\top~ & \mat X_{2,2}
\end{smallmatrix} 
\right]\succ 0$ is any  matrix such that $\| \WD{1}-\1 {\qD{1}}^\top\|_{\mat X}<1$. As a special case, if $[c]_{j_1} = 1$, we have $\hat y_{j,1}^k = x_1^k$, namely agent $j$ can use the real action $x_1$ received by agent $1$  when evaluating its cost. We can also model a scenario in which the \emph{exact} information on $x_1$ propagates over $\gD{1}$ but with some delay, by choosing $\gD{1}$ as a directed tree. 


	\item[(e)] Each graph $\gD{i}$ can be chosen independently. For instance, one variable $x_i$ might be publicly available (choose $\gD{i}$ as a star graph), while other actions can only be reconstructed via consensus. The convergence of these cases would otherwise require ad-hoc analysis (e.g., to determine bounds on the step sizes).
\end{enumerate}


\subsection{GNE seeking in aggregative games}\label{subsec:GNEalgorithms}


Next, we also address the presence of the coupling constraints $Ax \leq  a$. These are  partitioned in $M$  row-blocks, i.e.,
$\mat A=[\mat A_{m,i}]_{\m,\i} $, $a=\col((\sum_{\i} a_{m,i})_{\m})$,
where,  for all $ m \in \M\coloneqq\{1, 2,\dots,M \}$,  $\mat A_{m,i} \in \R^{n_{\lambda_m} \times n_{x_i}}$ and 
$a_{{m},i} \in \R^{n_{\lambda_m}}$ are local data kept by agent $i$. The coupling constraints sparsity pattern is described by the interference graph $\gsup{I}{\lambda} =(\M,\I, \Esup{I}{\lambda}) $, where $(m,i) \in \Esup{I}{\lambda}$ if agent $i$ is involved in  the  constraints block  indexed by $m$; in other terms,
\begin{align}
	(\forall  (m,i) \notin \Esup{I}{\lambda})   \quad  \mat A_{m,i}=\0, \ a_{{m},i}=\0,
\end{align}
and the $m$-th block constraint can be written as $\sum_{i \in \neigosup{I}{m}{\lambda} } \mat A_{m,i}x_i -a_{m,i} = \0$. Correspondingly, we partition the dual variable as $\lambda = \col((\lambda_m) _\m)$, $\lambda_m \in \R^{\n{\lambda_m}}$.

Furthermore, we focus on the particularly relevant class of aggregative games \cite{KoshalNedicShanbhag_Aggregative_OR2016,Parise_Almost_TCNS2020}, where
the agents only  need to reconstruct an aggregation value (usually of dimension independent of $N$, e.g., the average of all the actions \cite{KoshalNedicShanbhag_Aggregative_OR2016}) to evaluate their objective functions.  In particular, the cost coupling  arises via an aggregation mapping $\sigma:\R^{n_x}  \rightarrow \R^\n{\sigma} $,  so that for all $\i$ and for some function $\bar f_i$,
\begin{align} 
f_i(x_i,x_{-i}) =\bar  f_i(x_i,\sigma(x)).
\end{align}
 Let $\sigma$ be partitioned as $\sigma= \col( (\sigma _q)_\q) $, $\Q\coloneqq\{1,2,\dots,Q\}$, and let $\gsup{I}{\sigma} =(\Q,\I,\Esup{I}{\sigma})$ be an interference graph such that, for all $x$ (and with the usual overloading)
\begin{align}
	\sigma_q(x) &= \sigma_q((x_i)_{i\in \neigosup{I}{q}{\sigma} }  )
	\\
	\bar{f_i}(x_i,\sigma(x)) & = \bar{f_i}( x_i, (\sigma_q(x))_{q\in \neigsup{I}{i}{\sigma}}),
\end{align}
namely, $(q,i) \in \Esup{I}{\sigma}$ whenever \emph{either}  $\sigma_q(x)$ explicitly depends on $x_i$, or $\bar{f_i}(x_i,\sigma)$ explicitly depends on $\sigma_q$. We restrict the analysis to \emph{affine} aggregation functions, so that 
\begin{align}
	\sigma_q (x) & \coloneqq \textstyle \sum_{i\in \neigosup{I}{q}{\sigma} } \mat  B_{q,i} x_i +  b_{q,i},
	\\
	\sigma (x) &\hphantom{:}= \col( (\sigma _q)_\q)  = \mat  B x + b,
\end{align}
  $ \mat B_{q,i} \in \R^{\n{\sigma_q} \times \n{x_i}}$, $b_{q,i} \in \R^{\n{\sigma_q}}$ being local data of agent $i$, $\mat B \coloneqq [\mat B_{q,i}]_{q\in \set Q, \i}$, $b = \sum_{\i} \col( (b_{q,i})_{q\in\set Q}) $, with     
  \begin{align}
     (\forall (q,i) \notin \Esup{I}{\sigma}) \quad  \mat B_{q,i} =  \0, \ b_{q,i}=\0.\end{align}

If $M=Q=1$, we recover the standard generalized aggregative games \cite{GadjovPavel_Aggregative_TAC2021,Bianchi_GNEPPP_AUT2022}, where each agent must estimate the whole dual variable $\lambda\in \R^ \n{\lambda}$ and the whole aggregative value $\sigma(x)\in \R^\n{\sigma}$.  Instead, our idea is to leverage the possible problem sparsity by assigning to each agent copies of only some of the components of the dual variable and of the aggregation function, as specified by two\footnote{Here we consider \gls{END} problem for two variables of interest $\lambda$ and $\sigma$. The notation in Section \ref{sec:mathbackground} is recovered by setting $y= \col(\lambda,\sigma(x))$,  with $P=M+Q$. However, for readability, here we treat the two variables separately, as two independent instances of \gls{END}, where the corresponding quantities in \cref{subsec:notation} are distinguished via the superscripts $\lambda$ and $\sigma$ (e.g., $\gDlambda{m}$ and $\gDsigma{q}$ describes how the agents exchange the estimates of $\lambda_m$ and $\sigma_q$, respectively). We also indicate the estimates of $\lambda_m$ and $\sigma_q(x)$ kept by agent $i$  with $\hlambda_{i,m}$ and $\hsigma_{i,q}$ (in place of  $\hy_{i,m}^\lambda$ and $\hy_{i,q}^\sigma$), and analogously for the stacked vectors --- e.g., $\hsigma_q= \col((\hsigma_{i,q})_{i\in \neigosup{E}{q}{\sigma}})$, $\hsigma = \col((\hsigma_{q})_\q)$ and $\hlambda_m= \col((\hlambda_{i,m})_{i\in \neigosup{E}{m}{\lambda}})$, $\hlambda = \col((\hlambda_{m})_\m)$.} estimate graphs $\gsup{E}{\lambda} = (\set M, \set I, \Esup{E}{\lambda}) $, $\gsup{E}{\sigma} = (\set Q, \set I, \Esup{E}{\sigma}) $.  We postpone the discussion of a motivating application to \cref{sec:numerics}.


Let us define
the \emph{extended pseudo-gradient mapping}
\begin{align}
\hspace{-1em}
\Fbstilde(x,\hsigma ) & \coloneqq \col( (\grad{x_i} \bar f_i (x_i,\hsigmat_i)+ \tilde {\mat B}_i^\top \grad{\hsigmat_i } \bar{f}_i  (x_i,\hsigmat_i)) _{\i}),
\end{align}
where  $ \tilde{\mat B}_i \coloneqq \col ((\mat B_{q,i})_{q \in \neigsup{E}{i}{\sigma}})$, and we recall that $\hsigmat_i$ are the estimates of the components of $\sigma$ kept by agent $i$ and (with the customary overloading) $ \bar f_i(x_i,\hsigmat_i) = f_i(x_i,(\hsigma_{i,q})_{q\in \neigsup{E}{i}{\sigma}}) \coloneqq f_i(x_i, (\hsigma_{i,q})_{q\in \neigsup{I}{i}{\sigma}})$. Note that  $\Fbstilde$
coincides with the pseudo-gradient mapping when the estimates are exact and at consensus, i.e., $\Fbstilde(x,\Ebs{}^\sigma  (\sigma(x)))  = \op F(x)$. We study the following distributed iteration:
\begin{subequations}\label{eq:GNE_algo}
\begin{align}
	\label{eq:GNE_algo:x}
	x^{k+1} & =\proj_{\Omega} \Big( 
    \begin{multlined}[t]
    x^k -\beta \big( \alpha \Fbstilde(x^k,\hsigma^k ) + {\bs{\mat B}}^\top  \LDbssup{\sigma} \hsigma^k  \\ +\bs{\mat A}^\top \hlambda^k \big) \Big) 
    \end{multlined}
    \\
	\label{eq:GNE_algo:sigma}
	\hsigma^{k+1} &=\hsigma^{k}-\beta\LDbssup{\sigma} \hsigma^k + { \bs{\mat B}} (x^{k+1}-x^k)
	\\
		\label{eq:GNE_algo:z}
	\hz^{k+1}& = \hz^k+\beta \LDbssup{\lambda} \hlambda^k
	\\
		\label{eq:GNE_algo:lambda}
	\hlambda^{k+1} & = \proj_{\R_{\geq 0}^ \n{\hlambda}} \Big(
    \begin{multlined}[t]
        \hlambda^k- \beta \big(\LDbssup{\lambda} (2\hz^{k+1}-\hz^k ) \\ -\bs{ \mat A}(2x^{k+1}-x^k ) +\bs{a} \big)
     \Big), \end{multlined} \\[-1em] \nonumber
\end{align}
\end{subequations}
where $\alpha,\beta>0$ are step sizes; 
$
  \bs{z} = \col ((\hz_m)_{m\in \set M}) \in \R^{\n\hlambda}$ and $ \bs{z}_{m} = \col((\hz_{i,m})_{i\in\neigosup{E}{m}{\lambda}})$ where, for all $ m \in \neigsup{E}{i}{\lambda}$, $\hz_{i,m}\in \R^{\n{\lambda_m}}$ is an  auxiliary dual variable kept by agent $i$; 
\begin{align*}
\bs{\mat A} &\coloneqq  \mat {P^\lambda} ^ \top \diag( (\tilde{\mat A}_i)_{\i} ), \qquad 
\tilde{\mat A}_i  \coloneqq \col((\mat A_{m,i})_{m\in\neigsup{E}{i}{\sigma}})
\\
\bs{a} &  \coloneqq {\mat P^\lambda} ^\top   \col((\tilde a_i)_{\i}),
\qquad
\tilde a_i  \coloneqq \col ((a_{m,i})_{m \in \neigsup{E}{i}{\lambda}})
\\
\bs{\mat B} & \coloneqq \diag((N_q^\sigma I_{\n{\hsigma_{\!q}}})_\q) \mat \, {\mat P^\sigma} ^ \top \diag( (\tilde{\mat B}_i)_{\i} )
\\
\bs{b} &\coloneqq \diag((N_q^\sigma I_{\n{\hsigma_{\!q}}})_\q) \mat \, {\mat P^\sigma} ^ \top  \col((\tilde b_i)_{\i})
\\
\tilde b_i & \coloneqq \col((b_{q,i})_{q\in \neigsup{E}{i}{\sigma}})
\end{align*}
and we recall that $N_q^\sigma = | \neigosup{E}{q}{\sigma}|$, that with $\tilde{}$ we indicate agent-wise stacked quantities and that  ${\mat {P}^\lambda}$ and ${\mat {P}^\sigma}$ are permutation matrices (i.e., ${\mat {P}^\lambda}^\top
\tilde{\hlambda} = \hlambda$, ${\mat{P}^\sigma}^\top
\tilde{\hlambda} = \hlambda$).
We impose
\begin{align}\label{eq:hsigmainitialization}
    \hsigma^0 = \bs{\mat B} x^0 + \bs{b},
\end{align}
or, agent-wise, $\hsigmat_i^0 = \col( (N^\sigma_q\mat B_{q,i}x_i^0 + N^\sigma_q b_{q,i})_{q\in \neigsup{E}{i}{\sigma}})$; the other variables are initialized arbitrarily. 

The algorithm is based on primal-dual pseudo-gradient dynamics, where the update of each $\hsigma_q$ represents a dynamic tracking of the aggregation function $\sigma_q$, over the graph $\gDsup{q}{\sigma}$. It is inspired by the methods in \cite{Pavel_GNE_TAC2020,GadjovPavel_Aggregative_TAC2021}, and as in these works it is derived as a \gls{FB} method  \cite[§26.5]{BauschkeCombettes_2017}. 

%
%
%
%

\begin{assumption}\label{asm:GNEconnected}
	For each $\m$,  $\gDsup{m}{\lambda}$ is undirected and  connected and $\WDsup{m}{\lambda}$ is symmetric. For each $\q$,   $\gDsup{q}{\sigma}$ is  strongly connected and $\WDsup{q}{\sigma}$ is balanced. \hfill $\square$
\end{assumption}
\begin{theorem}\label{th:GNE}
    Let \cref{asm:game_convexity,asm:strong_mon,asm:GNEconnected}
    hold, and assume that $\Fbstilde(x,\hsigma)$ is $\bar \theta$-Lipschitz continuous. Then, for any small-enough $\alpha>0$ there is a small-enough $\beta >0 $ such that the sequence $(x^k,\hsigma^k,\hz^k,\hy^k)_{\k} $ generated by \eqref{eq:GNE_algo} converges to a point $(x^\star,\Ebs{}^\sigma(\sigma(x^\star)),\hz^\star,\Ebs{}^{\lambda}(\lambda^\star))$, where $(x^\star,\lambda^\star)$ satisfies the \gls{KKT} conditions in \eqref{eq:KKT}, hence $x^\star$ is the \gls{v-GNE} of the game in \eqref{eq:game} (and the estimates of aggregation and dual variables are at consensus). 
    \hfill $\square$
\end{theorem}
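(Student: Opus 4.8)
The plan is to recognize \eqref{eq:GNE_algo} as a \emph{preconditioned forward--backward} iteration, in the spirit of \cite{Pavel_GNE_TAC2020,GadjovPavel_Aggregative_TAC2021}. Stacking the variables as $\omega^k \coloneqq \col(x^k,\hsigma^k,\hz^k,\hlambda^k)$, I would first rewrite the four updates in the single form $\omega^{k+1} = {\op J}_{\Phi^{-1}\mc B}\big( (\Id - \Phi^{-1}\mc A)\,\omega^k \big)$, where the single-valued forward operator $\mc A$ collects the (scaled) extended pseudo-gradient $\alpha\Fbstilde$ on the $x$-block together with the symmetric, positive-semidefinite tracking term $\LDbssup{\sigma}$ on the $\hsigma$-block and the $\bs{\mat B}$-couplings between them; the set-valued backward operator $\mc B$ collects the normal cone $\nc_\Omega$ on the $x$-block and the skew-symmetric coupling induced by the constraint matrix $\bs{\mat A}$ and by the dual Laplacian $\LDbssup{\lambda}$ (acting on the $\hz,\hlambda$-blocks); and the block preconditioner $\Phi$ has diagonal scaling $\beta^{-1}$ with off-diagonal entries encoding the over-relaxation steps $2x^{k+1}-x^k$ and $2\hz^{k+1}-\hz^k$. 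The target then becomes: show the iteration converges to a point of $\zer(\mc A + \mc B)$.

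The second step identifies $\zer(\mc A+\mc B)$ with the desired consensual KKT points. At any zero, the $\hsigma$- and $\hlambda$-components force $\LDbssup{\sigma}\hsigma^\star=\0$ and $\LDbssup{\lambda}\hlambda^\star=\0$; by \cref{asm:GNEconnected}, \cref{lem:connected} yields $\Piperp{}\hsigma^\star = \0$ (consensus of $\hsigma^\star$), while \cref{lem:rooted} gives $\Null(\LDbssup{\lambda})=\Ebs{}^\lambda$ (consensus of $\hlambda^\star$). A tracking invariant is needed here: multiplying \eqref{eq:GNE_algo:sigma} on the left by the constant left-null vector of the balanced $\LDbssup{\sigma}$ and telescoping from the initialization \eqref{eq:hsigmainitialization} shows the true aggregate is reproduced exactly, so in fact $\hsigma^\star=\Ebs{}^\sigma(\sigma(x^\star))$ and hence $\Fbstilde(x^\star,\hsigma^\star)=\op F(x^\star)$. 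Projecting the surviving relations onto the consensus subspaces then recovers exactly \eqref{eq:KKTa}, with $\lambda^\star$ the common value of $\hlambda^\star$, identifying $x^\star$ as the \gls{v-GNE}.

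The third step establishes the operator properties driving convergence. The operator $\mc B$ is maximally monotone, being the sum of the normal cone $\nc_\Omega$ and a bounded, everywhere-defined skew-symmetric linear map. The crux is the behaviour of the forward operator $\mc A$ in the $\Phi$-metric. Decomposing each aggregation estimate into its consensus and consensus-orthogonal components, one observes that along the consensus subspace $\Ebs{}^\sigma$ the term $\LDbssup{\sigma}$ vanishes but $\Fbstilde$ collapses to $\op F$, which is $\mu$-strongly monotone by \cref{asm:strong_mon}; along the orthogonal complement $\LDbssup{\sigma}$ is coercive with constant $\bar{\uplambda}/2$ by \cref{lem:connected}. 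Bounding the cross terms via the $\theta$-Lipschitz continuity of $\Fbstilde$ yields a $2\times2$ comparison inequality (in the consensus and orthogonal components) that is positive definite once $\alpha$ is small enough that the pseudo-gradient perturbation cannot overwhelm the Laplacian coercivity; this makes $\mc A$ strongly monotone on the primal--aggregation block and monotone overall. Fixing such an $\alpha$, the off-diagonal blocks of $\Phi$ scale with $\beta$, so $\Phi\succ0$ for $\beta$ small; with a valid metric in hand, the preconditioned forward--backward map is averaged in $\mc H_\Phi$, hence Fej\'er monotone, and $(\omega^k)_{\k}$ converges to a zero of $\mc A+\mc B$ --- which by the second step is the claimed limit, the strong monotonicity of $\op F$ pinning down $x^\star$ uniquely.

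I expect the third step to be the main obstacle: because the iteration runs with \emph{non-consensual} estimates $\hsigma^k$, the pseudo-gradient is strongly monotone only along the consensus direction, and the monotonicity of $\mc A$ must be synthesized by balancing this against the Laplacian coercivity on the orthogonal complement. This is precisely where the two step-sizes decouple --- $\alpha$ small to keep the Lipschitz cross-coupling dominated by $\bar{\uplambda}/2$, and then $\beta$ small to guarantee $\Phi\succ0$ and the requisite averagedness --- so reconciling these two smallness conditions and tracking them through the $2\times2$ comparison matrix is the technically delicate part, analogous to the step-size analysis underlying \cref{th:NE}.
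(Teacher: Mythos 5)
Your high-level plan---preconditioned forward--backward splitting, zeros characterized via Laplacian null spaces plus a tracking invariant, and the two-tier step-size condition (first $\alpha$, then $\beta$)---is indeed the paper's own strategy, but your execution has a structural gap: you keep $\hsigma$ as a state variable, whereas the paper first changes variables to $\hs \coloneqq \hsigma - \bs{\mat B}x - \bs{b}$ (cf.\ \eqref{eq:GNE_algo:s}). This is not cosmetic. With the preconditioner you describe (diagonal $\beta^{-1}$, off-diagonal blocks only for $\bs{\mat A}$ and $\LDbssup{\lambda}$), the update \eqref{eq:GNE_algo:sigma} cannot be put in the form $\0 \in \mc A(\omega^k) + \mc B(\omega^{k+1}) + \Phi(\omega^{k+1}-\omega^k)$: the term $\bs{\mat B}(x^{k+1}-x^k)$ involves the \emph{future} iterate, so it can live neither in the forward operator (which is evaluated at $\omega^k$) nor in your $\Phi$, which has no $x$--$\hsigma$ coupling block. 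You would have to either enlarge $\Phi$ with symmetric couplings $-\beta^{-1}\bs{\mat B}$, $-\beta^{-1}\bs{\mat B}^\top$ (correcting the $x$-diagonal to $\beta^{-1}(\id + \bs{\mat B}^\top \bs{\mat B})$ and re-verifying positive definiteness), or adopt the paper's substitution, after which the aggregation update becomes a genuine forward step and the $\Phi$ in \eqref{eq:Phimatrix} suffices.

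The second gap is in your third step and would make the proof fail as written: the forward operator is \emph{not} strongly monotone on the primal--aggregation block, nor cocoercive, on the whole space, and no smallness of $\alpha$ can fix this. Take a consensus direction $\delta \in \Ebs{}^\sigma$ and perturb $\hsigma \mapsto \hsigma + \delta$ with $x$ (and $\hz,\hlambda$) fixed: then $\LDbssup{\sigma}\delta = \0$, so the $\hsigma$-component of the operator difference vanishes and $\langle \mc A(\omega+\delta)-\mc A(\omega), \,\delta\rangle = 0$, while the $x$-component $\alpha(\Fbstilde(x,\hsigma+\delta)-\Fbstilde(x,\hsigma))$ is in general nonzero---contradicting both strong monotonicity and cocoercivity. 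Relatedly, your claim that ``$\Fbstilde$ collapses to $\op F$ along the consensus subspace'' holds only when the consensus value is the \emph{true} aggregate, i.e., $\Fbstilde(x,\Ebs{}^\sigma(\sigma(x))) = \op F(x)$; it fails for arbitrary consensual $\hsigma$. The repair is exactly the tracking invariant you derive in your second step, but it must be used \emph{inside the monotonicity analysis}, not only for the fixed-point characterization: by the initialization \eqref{eq:hsigmainitialization} the iterates remain in $\Sigma = \{\homega \mid \Piparallel{\sigma}\hs = \0\}$ (\cref{lem:GNE_invariance}), differences of points in $\Sigma$ have no consensus $\hs$-component (so the counterexample directions are excluded), and the paper accordingly proves only a \emph{restricted} cocoercivity of $\mathfrak A_1$---for $\homega \in \Sigma$ against $\homega^\star \in \zer(\mathfrak A)\cap\Sigma$ (\cref{lem:GNE_monotonicity})---and then runs the averagedness/Fej\'er argument on this invariant subspace. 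Once you restrict your $2\times 2$ comparison argument to $\Sigma$ and fix the preconditioner as above, your outline coincides with the paper's proof.
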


\smallskip
\begin{proof}
    See \cref{proof:GNE}, where we also provide explicit bounds for $\alpha$ and $\beta$. 
\end{proof}

To our knowledge, we are the first to consider the partial-coupling in the constraints or aggregation in generalized games. For non-generalized games, an approach is studied in \cite{Eslami:Unicast:TAC:2022}, where the cost of each agent is only affected by some of the components of an aggregation function: nevertheless, this algorithm  requires strong conditions on the (undirected) communication network -- necessary to allow the choice
$\g{I} = \g{E}$, see also \cref{ex:minimal} in \cref{app:tutorial}. Besides avoiding this limitation, we also considered the presence of coupling constraints and a more general aggregation function. 

The efficacy and improved scalability guaranteed by the method in \eqref{eq:GNE_algo} is demonstrated numerically in the next section.


\section{Illustrative example: Unicast rate allocation}\label{sec:numerics}


We study a bandwidth allocation problem with fixed routing \cite{Alpcan:Basar:CongestionControl:CDC:2012,Eslami:Unicast:TAC:2022},  modeled as a \gls{GNE} problem -- see \cref{fig:unicastscheme} for an illustration. Consider an undirected connected communication network $\g{C} =(\set{I},\E{C})$. From each node $i\in\mc{I}$, a user sends data with rate  $x_{i} \in [0,1]$, over a path $\set{L}_i$ -- \ie a sequence of consecutive edges over the graph $\g{C}$. User $i$ aims to choose $x_i$ to minimize the cost function
\begin{align*}
    J_i(x_i,x_{-i}) &\hphantom{:}= J_i(x_i,\{\sigma_l\}_{l\in \set L{i}}) = -U(x_i) + \textstyle \sum_{l \in \set{L}_i} c_l(x_i,\sigma_l)\\
    \sigma_l & \coloneqq \textstyle \sum_{j \mid l \in \set{L}_j} x_j,
\end{align*}
where $U_i$ is an utility function, $\sigma_l$ represents the aggregative rate over the link (\ie edge) $l$, and $c_l$ is a penalty related to link $l$ (\eg quantifying the loss of  service quality due to congestion \cite{Alpcan:Basar:CongestionControl:CDC:2012} or a  tax imposed by a network manager \cite{Eslami:Unicast:TAC:2022}). Furthermore, the capacity  of each link $l$ is bounded by the coupling constraints $\sigma_l \leq a_l $, for some  $a_l >0$. The objective is to seek a \gls{v-GNE} of the resulting generalized game, when the users can only communicate over the graph $\g{C}$.
The capacity of each link can change over time \cite{Alpcan:Basar:CongestionControl:CDC:2012} (e.g., because some external primary users have priority on the bandwidth use), thus the users might have to solve the problem multiple times. Therefore, it is important for the users to  solve the problem as efficiently as possible, to  quickly adapt to the changes. 
\begin{figure}[t] 
\centering
\includegraphics[width=0.85\columnwidth]{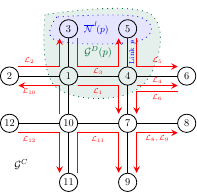}
\caption{Unicast rate allocation game. The main complication is that some users send data over the same link but cannot communicate: for instance, link $(4,5)$ -- labeled $p$ -- is used by users $3$ and $5$ (\ie $\neigo{I}{p} = \{3,5\}$), that are not communication neighbors.
 } \label{fig:unicastscheme}
\end{figure}

Let us relabel the ``active'' edges $\bigcup_{i\in\mc{I}} \{l \in \set L_i \}$ 
(\ie the edges of $\g{C}$ hosting at least one path) as $\{1,2,\dots,P\} \eqqcolon \set P$; further, let us define for all $\p$ and $\i$,
\begin{align*}
     \mat{A}_{p,i} = \mat{B}_{p,i} = \left\{ \begin{aligned}
         1, & \quad  \text{ if $p\in \set L_i$ (user $i$ uses link $p$)}
         \\
         0, & \quad \text{ otherwise},
     \end{aligned}
     \right.
\end{align*}
the so-called \emph{routing} matrix \cite{Alpcan:Basar:CongestionControl:CDC:2012}.
 With these definitions, the problem retrieves an aggregative game of the type  described in \cref{subsec:GNEalgorithms}, with $\set M = \set Q = \set P$. In particular,  only the aggregative variables and constraints relative to the links in $\set L_i$ directly affect agent $i$, \ie $\neigsup{I}{i}{\sigma} = \neigsup{I}{i}{\lambda} = \set L_i$.\footnote{The setup also fits  more general cases where from a node $i$ multiple distinct flows $x_{i,1},\dots,x_{1,\n{x_i}}$ are sent over distinct paths $\set L_{i,1},\dots,\set L_{i,\n{x_i}}$, by either a single user (adjusting the cost function) or by multiple competitive users (considering an augmented communication graph ${\g{C}}^\prime \supset \g{C} $).}

A similar setup was  considered in \cite{Salehisadaghiani_Graphical_AUT2018} (as a non-aggregative game, which is inefficient) and \cite{Eslami:Unicast:TAC:2022}; however, both works cannot deal with the capacity coupling constraints, and further assume that the graphs $\{\g{C}|_{\neigo{I}{p}}\}_{\p}$ are  connected, which is for example not the case in \cref{fig:unicastscheme} (see also \cref{ex:minimal} in \cref{app:tutorial}). Hence, the methods in \cite{Salehisadaghiani_Graphical_AUT2018,Eslami:Unicast:TAC:2022} cannot be applied. 


To seek a \gls{v-GNE}, we implement algorithm in \eqref{eq:GNE_algo}.
In particular, we fix  $\WDsup{p}{\sigma}=\WDsup{p}{\lambda} \eqqcolon \WD{p}$ for all $\p$. We are interested in comparing the  the performance of the algorithm for two different choices of the design graphs:
\begin{itemize}[leftmargin =*]
    \item \emph{Standard}: $\WD{p}$'s are chosen as in \eqref{eq:standard}, \ie  each agent estimates the whole dual variable and aggregative value: the problem sparsity is ignored;
    \item \emph{Customized}:  $\WD{p}$'s are chosen to minimize the per-iteration communication cost,  by solving \cref{prob} with: ``\emph{(iii)} For all $\p$, $\gD{p}$ is connected and  has minimal number of edges''.
\end{itemize}
We note that, for the customized algorithm, the design of each graph $\WD{p}$ corresponds to solving a Steiner tree problem as in \cref{fig:0}. This is achieved in a distributed manner, and it is further only done once even if the link capacities change after some time (as long as the routing is fixed).

\begin{figure}
\includegraphics[width=0.95\columnwidth]{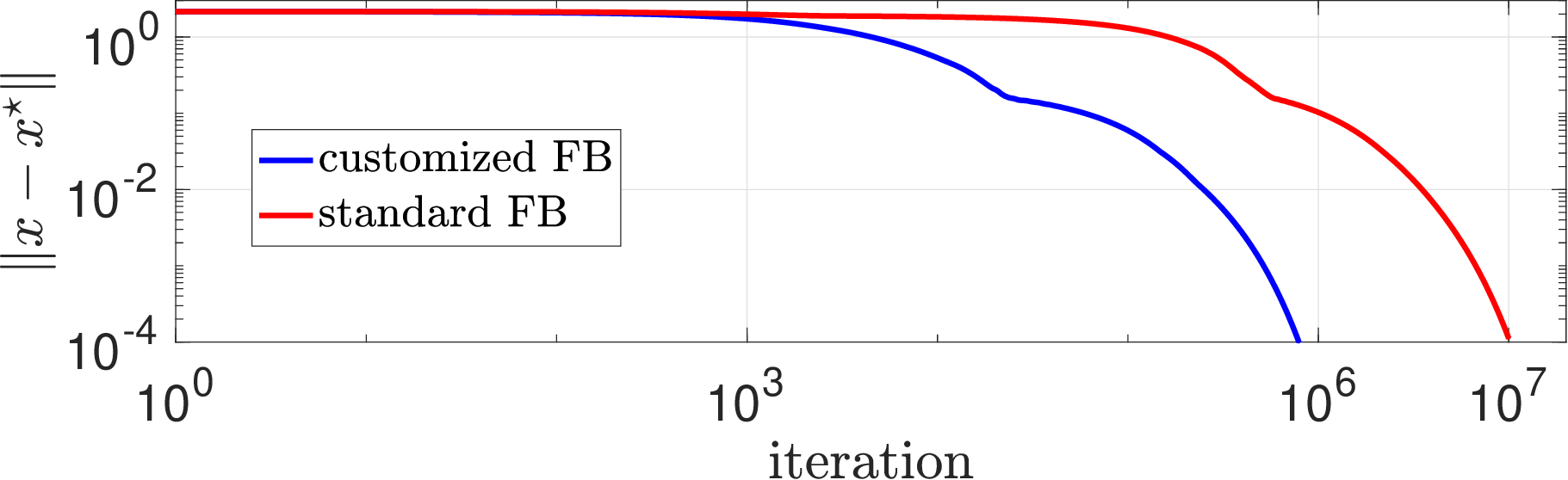}
\\[1em]
\includegraphics[width=0.95\columnwidth]{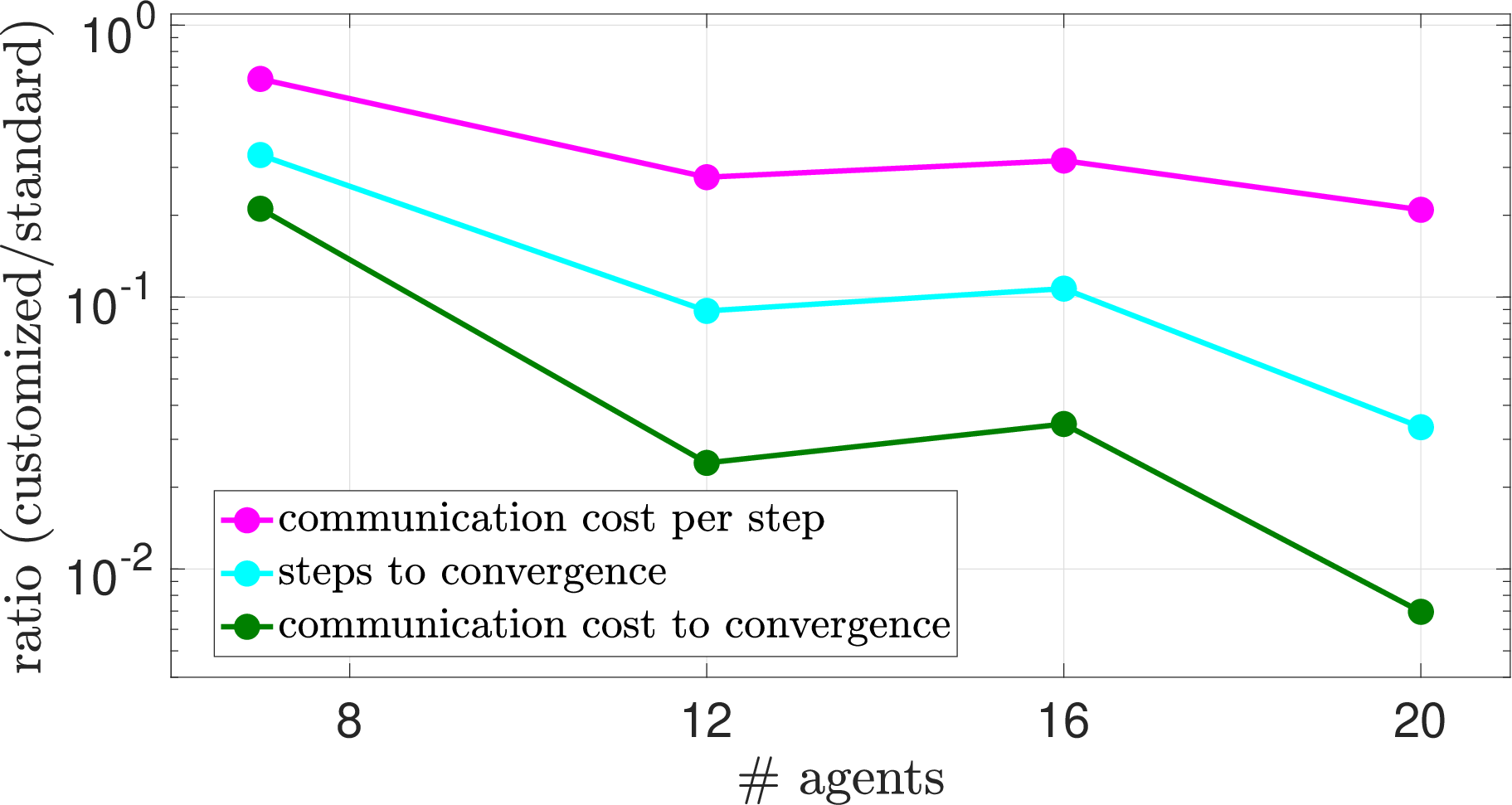}
\caption{Unicast rate allocation via algorithm \eqref{eq:GNE_algo}, for the scheme in \cref{fig:unicastscheme} (top) and for different randomly generated networks, with maximal path length $4$ and stopping criterion $\|x-x^\star\| \leq 10^{-2}$  (bottom).} \label{fig:unicastsimulation}
\end{figure}

We choose $U_i(x_i) = 10*\log(x_i+1)$, $c_l (x_i,\sigma_l) = \psi_l \frac{x_i}{1+e^{-\sigma_l}}$ with $\psi_l$ sampled randomly uniformly in the interval $[0,1]$
(\cref{asm:strong_mon} is satisfied in the feasible set, invariant for \eqref{eq:GNE_algo}), Metropolis-Hastings weights for every graph, $\alpha = 0.1$, $\beta = 10^{-3}$. 
The results are illustrated in \cref{fig:unicastsimulation}. In our first experiment, we consider the scenario in \cref{fig:unicastscheme}, with $I = P =12$. The customized algorithm converges to the unique \gls{v-GNE} $x^\star$ over $10$ times faster than the standard version; the communication burden at each step is also reduced, as the mean size of the (aggregative/dual) estimates kept and transmitted by each agent is $2.6$, instead of $P = 12$. Since the latter quantity grows with the problem dimension, we expect the gap between the two methods to increase for larger networks. Simulations with $I = 7,12,16,20$ ($P= 7,16,18,26)$ confirm this intuition: for the case $I = 20$, the customized algorithm saves $99 \%$ of the communication cost (where sending one scalar value to \emph{one} neighbor on $\g{C}$ costs $1$, in unicast fashion). 
Finally, for the selected parameters and $I \geq 24$, the standard algorithm fails to converge in our simulations, while the customized algorithm converges (at least) up to $I =50$ -- suggesting tolerance to larger step sizes, with upper bounds less affected by the problem dimension. 

In conclusion, while requiring some initial design effort, the customized method can result in substantial efficiency improvement, especially if the \gls{v-GNE} problem is solved multiple 
times due to time-varying cost parameters or link capacities.

\section{Conclusion}\label{sec:extension}

We  presented \gls{END}, a graph-theoretic language of consensus in distributed iterations. 
Our framework
allows for unprecedented flexibility in the assignment and exchange of estimates among the agents. In fact, \gls{END} algorithms can be tailored to exploit the  sparsity of  specific problem instances, improving scalability and reducing communication and memory bottlenecks, without requiring case-by-case convergence analysis.

We have exploited  \gls{END} to design algorithms for \gls{GNE}  seeking under partial-decision information, that improve on known ones in terms of both theoretical guarantees and numerical efficiency. Yet, \gls{END} can be applied to virtually any distributed decision problem, e.g., common fixed point computation, consensus optimization, multi-cluster games, aggregative optimization \cite{Li_AggregativeOpt_TCNS2022}.

 Future work should focus on computationally efficient and distributed methods to perform the allocation of the estimates; in particular, it would be highly valuable to dynamically assign the estimates online, thus avoiding the need for any a priori design. Another interesting research direction is to combine  \gls{END} algorithms with other communication-reduction techniques, such as data sparsification or compression \cite{Koloskova:Stich:Jaggi:Compressed:ICML:2019}, which can further enhance efficiency.

\appendix
\subsection{Examples of the \gls{END} design phase} \label{app:tutorial}

In this section, we present some examples of \cref{prob} and discuss choices for the design and estimate graphs.

\subsubsection{Minimal memory allocation} \label{sec:minimalmemory} We consider the problem of minimizing, for each $\P$,  the number of copies of $y_p$, provided that the  conditions in \cref{prob:1,prob:2} are satisfied and the graphs $\{\gD{p}\}_{\p}$ enjoy some connectivity properties. In particular, consider \cref{prob} with
\begin{itemize}
\item[\emph{(iii)}] for each $\p$, $\gD{p}$ is rooted at $r_p \in \mc{I}$  [respectively, $\gD{p}$ is strongly connected]; the number $|\neigo{E}{p}|$ of nodes in $\gD{p}$  is minimal (provided that all the other specifications are satisfied).
\end{itemize}

If the problem is feasible, then by definition a solution is given by choosing each
$\gD{p}$ 
as a solution of 
an Unweighted Directed Steiner tree problem [of a Strongly Connected Subgraph problem] \cite{Charikar:Directed:Steiner:1999} (similarly to \cref{fig:0}).
A sufficient condition for the existence of a solution is that $\g{C}$ is strongly connected. 

Note that the solution  is generally not unique, specially because the specification is only given in terms of nodes. In fact, given any optimal choice $\mc{G}'$ for $\gD{p}$, any graph $\mc{G}'' $ such that 
$\mc{G}' \subseteq \mc{G} '' \subseteq \g{C} $ is 
is also a solution for \cref{prob}. In simple terms, we can add edges to $\mc{G}'$, an extra degree of freedom that can be employed to improve connectivity or robustness to link failure (possibly at the cost of extra communication). One can also impose a different connectedness/efficiency specification on each graph $\gD{p}$ (see \cref{fig:0}).

\subsubsection{Fair allocation and bandwidth constraints}
Instead of minimizing  the overall dimension of $\hy$, it can be convenient to promote allocations where the memory occupation (or the communication requirements) are partitioned equally among the agents: for example, assuming that undirected connected graphs are required, this could be achieved by  sequentially designing the graphs $\gD{p}$'s as solutions of a Steiner tree problem (see \cref{sec:background}), but sequentially penalizing unbalanced allocations by opportunely choosing the edge weights for each $p$. Bandwidth constraints can be addressed similarly, to avoid overloading some channels of the communication network $\g{C}$.

\subsubsection{Designing the communication graph} 

In this paper, we consider the graph $\g{C}$ as given,
which is  natural  for ad hoc networks  or when relying on existing infrastructures. Yet, other works \cite{Salehisadaghiani_Graphical_AUT2018,Salehisadaghiani_Nondoubly_EAI2020,Eslami:Unicast:TAC:2022}  assume that the communication network can be freely designed. In the \gls{END} framework, this case is addressed by formally assuming that  $\g{C}$ is complete; then the graphs $\{ \gD{p}\}_{\p}$ can be chosen to fulfill some specifications (e.g., minimize the number of active edges in $\g{C}$ -- which determines the physical channels/edges  actually needed). 

\subsubsection{Straightforward designs} 
\label{subsubsec:straightforward}
In some cases, the problem structure immediately suggests an optimal estimate allocation.
\begin{example}[Partitioned optimization] \label{ex:partitioned}
Motivated by distributed estimation and  resource allocation applications, the works \cite{Notarnicola_Partitioned_TCNS2018,Erseghe_ADMM_SPL2012,Todescato_Partition_AUT2020} solve optimization problems of the form
	\begin{equation} \label{eq:localdo}
	\min_{y_i \in\R^{\n{y_i}}, i\in\mc{I}} \  \textstyle\sum_{\i} f_i(y_i, (y_j)_{j \in\neig{C}{i}} ),
\end{equation}
where the cost $f_i$ of agent $i$ depends on its local action $y_i$ and on the actions of its neighbors over the undirected communication network $\g{C}$: this is a special case of \cref{ex:do1}, with $\P=\I$,  $\neigo{I}{i} = \neig{C}{i} \cup \{i\}$.
Consider \cref{prob}, with ``\emph{(iii)}~$\forall \i$, $\g{D}_i$ is connected;  $\g{D}_i$ has minimum number of nodes (provided that all other specifications are met)''. A solution is to fix each $\gD{i}$ as the undirected star graph centered in $i$  with vertices $\V{D}_i =\neigo{I}{i}$: 
then agent $i$ keeps all and only proxies of the actions that affect its cost. In fact, this is the solution employed in  \cite{Notarnicola_Partitioned_TCNS2018,Erseghe_ADMM_SPL2012,Todescato_Partition_AUT2020}. \hfill $\square$
	\end{example}\begin{example}[$\g{E} = \g{I}$]\label{ex:minimal}
With the goal of minimizing the overall memory allocation, consider the choice 
\begin{align}\label{eq:minima}
 \gD{p} = \g{C}|_{\neig{I}{p}}, \qquad \forall \p.
 \end{align}
 In this case,  $\g{E} = \g{I}$, \ie each agent only estimates the minimum amount of variables needed for local computation. Yet, this is only a viable option  if the resulting graphs $\{\gD{i}\}_{\i}$
ensure the desired connectedness properties in \cref{prob:3},  which usually not verified (see $\gD{1}$ in \cref{fig:0}), but holds in some particular cases (\eg \cref{ex:partitioned}, $\gD{2}$ in \cref{fig:0}; see also \cite[Asm.~5]{Salehisadaghiani_Graphical_AUT2018}, \cite[Asm.~6]{Salehisadaghiani_Nondoubly_EAI2020} for sufficient conditions in the context of \gls{NE} seeking). \hfill $\square$
\end{example}
\begin{example}
    Another example where \eqref{eq:minima} is a viable option was studied in \cite{Salehisadaghiani_Nondoubly_EAI2020}, for \gls{NE} problems arising in social networks, where the cost of agent $i$ is $f_i(y_i,(y_j)_{j \in \neig{C}{i}},(y_\ell)_{l \in \neig{C}{j},j \in\neig{C}{i}})$, \ie the cost of each agent depends on its own action $y_i$ and on the actions of its (in-)neighbors and  neighbors' neighbors. \hfill $\square$
\end{example}

\subsection{Proofs}

\subsubsection{Proof of \cref{th:NE}} \label{proof:NE}
We study convergence of \eqref{eq:proxgrad_compact} in the space weighted by
$\Xi \succ 0$.
  Let
$\hy^\star\coloneqq\Ebs{}({x^\star})$, where $x^\star$ is the \gls{NE} of \eqref{eq:game}. Our proof is based on the following lemma.
\begin{lemma}\label{lem:NEcrux}
	Let  $\Fmc(\hy)\coloneqq\WDbs \hy - \alpha  \Rmc ^\top \Fbs (  \WDbs \hy)$. Then, for any $\hy\in\R^\n{\hy}$, it holds that
	\begin{equation*}
		\|  \Fmc(\hy) - \Fmc(   \hystar) \|_\Xi \leq \sqrt{\rho_\alpha} \| \hy -\hystar\|_\Xi. \QEDopenhereeqn
	\end{equation*}
\end{lemma}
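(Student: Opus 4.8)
The plan is to prove that $\Fmc$ is a $\sqrt{\rho_\alpha}$-contraction in $\mc H_\Xi$ by decomposing the error $\hy-\hystar$ into a consensus (weighted-average) component and a disagreement component, bounding the action of $\Fmc$ on each, and collecting the estimates into the $2\times 2$ matrix $\mat M_\alpha$. First I would use that $\hystar=\Ebs{}(x^\star)\in\Ebs{}$ together with $\WD{i}\1_{N_i}=\1_{N_i}$ (\cref{asm:row_spanning}) to write $\Fmc(\hy)-\Fmc(\hystar)=\WDbs(\hy-\hystar)-\alpha\Rmc^\top\!\big(\Fbs(\WDbs\hy)-\Fbs(\WDbs\hystar)\big)$, exploiting $\WDbs\hystar=\hystar$ and $\Fbs(\hystar)=\op F(x^\star)$ (the extended pseudo-gradient agrees with $\op F$ at consensus).

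Next I would introduce the projection $\bs\Pi:=\diag\big((\1_{N_i}{\qD{i}}^\top\otimes\id_{\n{x_i}})_i\big)$ onto $\Ebs{}$ built from the left Perron eigenvectors $\qD{i}$, and split $\hy-\hystar=\e_\parallel+\e_\perp$ with $\e_\parallel:=\bs\Pi(\hy-\hystar)$, $\e_\perp:=(\id-\bs\Pi)(\hy-\hystar)$. The algebraic identities of \cref{asm:diagonal_con} --- namely $[\1_{N_i}^\top\mat Q_i]_{i_i}=1$ and $\1_{N_i}^\top\mat Q_i\WD{i}(\id-\1_{N_i}{\qD{i}}^\top)=\0^\top$ --- are exactly what render this decomposition $\Xi$-orthogonal (so that $\|\hy-\hystar\|_\Xi^2=\|\e_\parallel\|_\Xi^2+\|\e_\perp\|_\Xi^2$) and, together with $\sigma_i=\|\WD{i}-\1_{N_i}{\qD{i}}^\top\|_{\mat Q_i}<1$, ensure that $\WDbs$ maps the disagreement subspace into itself with $\Xi$-gain at most $\bar\sigma$.

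Then I would bound the two $\Xi$-orthogonal components of the output. For the disagreement part, $(\id-\bs\Pi)(\Fmc(\hy)-\Fmc(\hystar))=\WDbs\e_\perp-\alpha(\id-\bs\Pi)\Rmc^\top g$ with $g:=\Fbs(\WDbs\hy)-\Fbs(\WDbs\hystar)$; the first summand contributes $\bar\sigma\|\e_\perp\|_\Xi$ and the second is controlled through the $\bar\theta$-Lipschitz bound on $\Fbs$. For the consensus part, the identity $[\1_{N_i}^\top\mat Q_i]_{i_i}=1$ makes the weighted average of $\Rmc^\top g$ reproduce $g$ with unit coefficient, so the average dynamics collapses to a genuine forward step $\bar x-\alpha(\op F(\bar x)-\op F(x^\star))$ on the weighted-average action $\bar x$; $\mu$-strong monotonicity and $\theta$-Lipschitzness then yield the factor $1-2\alpha\mu\underline\gamma^2+\alpha^2\theta^2\bar\gamma^2$, where $\underline\gamma,\bar\gamma$ arise from the norm equivalence $\underline\gamma\|\e_\parallel\|_\Xi\le\|\bar x-x^\star\|\le\bar\gamma\|\e_\parallel\|_\Xi$ on $\Ebs{}$. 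Assembling the squared bounds and the cross terms into $v:=(\|\e_\parallel\|_\Xi,\|\e_\perp\|_\Xi)^\top$ gives $\|\Fmc(\hy)-\Fmc(\hystar)\|_\Xi^2\le v^\top\mat M_\alpha v\le\eigmax(\mat M_\alpha)\,\|v\|^2=\rho_\alpha\|\hy-\hystar\|_\Xi^2$, and taking square roots closes the argument.

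The main obstacle will be the cross terms, i.e. the off-diagonal entries of $\mat M_\alpha$. The perturbation $g$ depends simultaneously on $\e_\parallel$ and $\e_\perp$; writing $g=[\op F(\bar x)-\op F(x^\star)]+[\Fbs(\WDbs\hy)-\op F(\bar x)]$, the second bracket is of order $\bar\theta\|\WDbs\e_\perp\|_\Xi\le\bar\theta\bar\sigma\|\e_\perp\|_\Xi$, and one must show that its interaction with the forward step reproduces exactly $\bar\sigma(\alpha(\bar\theta+\theta\bar\gamma)+\alpha^2\bar\theta\theta\bar\gamma)$ off the diagonal and $\bar\sigma^2(1+\alpha\bar\theta)^2$ on the $(2,2)$ entry. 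A second, more delicate point --- and the reason for weighting by $\Xi$ rather than by the uniform average $\Piparallel{}$ --- is that $\WDbs$ is only row-stochastic (\cref{asm:row_spanning}), so the $\qD{i}$ may have vanishing entries; the matrices $\mat Q_i$ of \cref{asm:diagonal_con} are precisely what make the Perron decomposition orthogonal and the disagreement genuinely contractive in this non-doubly-stochastic regime.
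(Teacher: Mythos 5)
You follow the paper's overall strategy (the Perron-based parallel/disagreement splitting, a $2\times2$ quadratic form, then $\eigmax(\mat M_\alpha)$), but your pivotal claim is false: the identities in \cref{asm:diagonal_con} do \emph{not} make the splitting $\hy-\hystar=\e_\parallel+\e_\perp$ $\Xi$-orthogonal. The identity $\1_{N_i}^\top \mat Q_i \WD{i}(\id-\1_{N_i}{\qD{i}}^\top)=\0^\top$ has $\WD{i}$ sandwiched inside: it makes $\Ebs{}$ $\Xi$-orthogonal to the image of the disagreement space \emph{under the mixing matrix} $\WDbs$, not to the disagreement space itself. The latter would require $\1^\top\mat Q_i$ to be proportional to ${\qD{i}}^\top$, which the assumption does not imply: in \cref{rem:asm6isweak}\emph{(ii)} (star graphs) one has $\mat Q_i=\id$ and $\qD{i}=e_{i_i}$, so $\bs\Pi=\diag((\1_{N_i}e_{i_i}^\top\otimes\id_{\n{x_i}})_{\i})$ is an oblique projection and $\1^\top(\id-\1_{N_i}e_{i_i}^\top)\neq\0^\top$. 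Consequently both Pythagoras identities your argument rests on --- $\|\hy-\hystar\|_\Xi^2=\|\e_\parallel\|_\Xi^2+\|\e_\perp\|_\Xi^2$ on the input, and the analogous splitting of $\|\Fmc(\hy)-\Fmc(\hystar)\|_\Xi^2$ into parallel and perpendicular output components --- are unjustified (an oblique projection can inflate norms by a factor of order $\sqrt{N_i}$), and the final chain from the quadratic form in $(\|\e_\parallel\|_\Xi,\|\e_\perp\|_\Xi)$ to the bound $\rho_\alpha\|\hy-\hystar\|_\Xi^2$ collapses.

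The paper's proof is organized precisely so as to need only the orthogonality that \cref{asm:diagonal_con} actually provides. It applies $\WDbs$ \emph{first}: writing $\hyhat=\WDbs\hy=\hypar+\WDbs\hyperp$ (row stochasticity, \cref{asm:row_spanning}, fixes the parallel part), the assumption gives $\langle\hypar-\hystar,\WDbs\hyperp\rangle_\Xi=0$, and the square $\|(\hyhat-\alpha\Rmc^\top\Fbs(\hyhat))-(\hystar-\alpha\Rmc^\top\Fbs(\hystar))\|_\Xi^2$ is expanded keeping every gradient term whole: the gradients are never projected, only paired against $\hypar-\hystar$ or $\WDbs\hyperp$ and bounded via Cauchy--Schwarz together with $\|\Rmc^\top v\|_\Xi^2\le\max_{\i}[\mat Q_i]_{i_i,i_i}\|v\|^2$ (the origin of $\bar\theta$), while the strong-monotonicity term enters with unit weight through the normalization $[\1^\top\mat Q_i]_{i_i}=1$ acting inside the $\Xi$-inner product, not through $\bs\Pi$. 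Your output-projection route would not recover $\mat M_\alpha$ even where orthogonality holds: $\bs\Pi\Rmc^\top$ rescales the gradient block by $[\qD{i}]_{i_i}$, so the monotonicity term would carry the weight $[\qD{i}]_{i_i}(\1^\top\mat Q_i\1)$ rather than $1$, and the disagreement bound would pick up the $\Xi$-operator norm of the oblique projector, a constant absent from $\mat M_\alpha$. (To be fair, the one step you cannot rescue by imitating the paper is its very last equality, $\|\hypar-\hystar\|_\Xi^2+\|\hyperp\|_\Xi^2=\|\hy-\hystar\|_\Xi^2$, which quietly invokes the same input-side orthogonality; that identity holds under \cref{rem:asm6isweak}\emph{(i)}, where $\1^\top\mat Q_i\propto{\qD{i}}^\top$, but not under \emph{(ii)}. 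A sound argument must at least confine the difficulty to that single point, as the paper does, rather than base the entire expansion on it.)
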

\vspace{1em}
\begin{proof}
	Let $\hy= \hypar+\hyperp$, where $\hypar \coloneqq \diag ( ( (\1_{N_i} {\qD{i}}^\top) \otimes \id_{\n{x_i}}  )_{\i}) \hy \in \Ebs{}
	$, and thus $\hypar =  \Ebs{}(\yhatpar)$ for some $\yhatpar \in \R^{\n{y}}$.
	Let $\hyhat\coloneqq \WDbs \hy =  \hyhatpar+\hyhatperp$, where $\hyhatperp \coloneqq \WDbs \hyperp = \WDbs \diag( ((\id-\1_{N_i}  {\qD{i}}^\top) \otimes \id_{\n{x_i}})_{\i}) \hy $ and  we used that $ \WDbs \hypar= \hypar = \Ebs{}({\yhatpar})$ (by row stochasticity). By \cref{asm:diagonal_con}, we  have  $ \qD{i} \1_{N_i}^\top \mat{Q}_i \WD{i} (\id-\1_{N_i}  {\qD{i}}^\top) =\0$, and hence $ \langle \hyhatpar, \hyhatperp \rangle_{\Xi} =0$. Therefore 
	%
	\begin{align*}
		&  \hphantom {{} = {}}	\| (\hyhat - \alpha \Rmc^\top  \Fbs (\hyhat )) - (\hystar - \alpha \Rmc ^\top \Fbs (\hystar)) \|_\Xi^2
		\\
		&  = \| \hyhatpar - \hystar \|_\Xi^2 + \| \hyhatperp \|_{\Xi}^2
		\\
		& \hphantom{{} = {}}  +
		\alpha^2 \| \Rmc^\top(\Fbs(\hyhat) -\Fbs(\hyhatpar)+\Fbs(\hyhatpar)-\Fbs(\hystar))\|_{\Xi}^2
		\\
		&  \hphantom {{} = {}} - 2\alpha  \langle  \hyhatperp, \Rmc^\top(\Fbs(\hyhat) -\Fbs(\hystar) \rangle_\Xi
		\\
		&  \hphantom {{} = {}} - 2\alpha \langle   \hyhatpar - \hystar , \Rmc^\top(\Fbs(\hyhat) -\Fbs(\hyhatpar) \rangle_\Xi
		\\
		&   \hphantom {{} = {}} - 2\alpha \langle   \hyhatpar - \hystar , \Rmc^\top(\Fbs(\hyhatpar) -\Fbs(\hystar) \rangle_\Xi
		\\
		\nonumber
		& \leq
		\| \hyhatpar - \hystar \|_\Xi^2 + \| \hyhatperp \|_{\Xi}^2  +\alpha^2\textstyle
		(  \bar{\theta}\| \hyhatperp \|_{\Xi} + \theta \bar{\gamma} \| \hyhatpar - \hyhatpar\|_\Xi)^2
		\\
		\nonumber
		&  \hphantom {{} = {}} +2\alpha
		\bar{\theta}
		\|  \hyhatperp \|_{\Xi} (\| \hyhatpar - \hystar \|_\Xi+\| \hyhatperp \|_{\Xi})
		\\
		\numberthis
		\label{eq:NE_usefulstep}
		&  \hphantom {{} = {}}  + 2\alpha
		\theta \bar\gamma\| \hyhatpar - \hystar \|_\Xi \| \hyhatperp \|_{\Xi}
		- 2\alpha
		\mu \underline \gamma ^2
		\| \hyhatpar - \hystar \|_\Xi^2
	\end{align*}
	where the last inequality follows by the Cauchy--Schwartz inequality and using that $\| \Rmc^\top v\|^2_{\Xi} \leq \max_{\i} \{ [\mat Q_i]_{i_i,i_i}\} \| v \|^2 $ for all $v\in \R^\n{x}$, because $\Rmc \Xi \Rmc^\top =\diag(([\mat Q_i]_{i_i,i_i} \id)_{\i}) $;  that $\Fbs$ is $\theta$-Lipschitz continuous if $\op F$ is 
	(see \cite[Lem.~1]{Bianchi_Timevarying_LCSS2021}); finally, that $\|\Fbs({\hyhatpar})-\Fbs(\hystar)\|^2= \| \op F(\yhatpar)- \op F(x^\star)\|^2 \leq \theta^2 \| \yhatpar-x^\star\| = \theta^2\| \hyhatpar - \hystar\|_{\diag((\mat{Q}_i\otimes \id /(\1^\top \mat Q_i \1))_{\i})}$
	(the last equality due to $(\hyhatpar-\hystar) \in \Ebs{}$)
	and similarly that   $\langle   \hyhatpar - \hystar , \Rmc^\top(\Fbs(\hyhatpar) -\Fbs(\hystar))\rangle_\Xi= \langle   \yhatpar - x^\star ,  \op F(\yhatpar) - \op F(x^\star) \rangle  \geq \mu \|\yhatpar-x^\star \|^2 = \mu   \|\hyhatpar-\hystar \|^2_{\diag((\mat{Q}_i\otimes \id /(\1^\top\mat Q_i \1))_{\i})}$, 
	(by using the normalization $[ \1^\top \mat Q_i]_{i_i}= 1$ in \cref{asm:diagonal_con}).
	  In addition, by \cref{asm:row_spanning}, we have $ \|\hyhatperp \|_{\Xi} = \|\WDbs  \hyperp \|_\Xi \leq \bar{\sigma} \| \hyperp \|_\Xi  $; together with \eqref{eq:NE_usefulstep}, this yields 
	\begin{align*}
		& \hphantom{ {} \leq {} } \|  \Fmc(\hy) - \Fmc(   \hystar)) \|_\Xi
		\leq \begin{bmatrix}
			\| \hypar - \hystar \|_\Xi
			\\
			\| \hyperp \|_{\Xi}
		\end{bmatrix}^\top
		\textrm{M}_\alpha
		\begin{bmatrix}
			\| \hypar - \hystar \|_\Xi
			\\
			\| \hyperp \|_{\Xi}
		\end{bmatrix}
		\\
		& \leq \eigmax (\textrm{M}_\alpha) (\| \hypar - \hystar \|_\Xi ^2 +\| \hyperp \|_{\Xi}^2)
		\\
		& =\eigmax (\textrm{M}_\alpha) \| \hy- \hystar \|_\Xi ^2. \vspace{-1em} 
		\\[-3em]
	\end{align*}
\end{proof}
To conclude the proof of \cref{th:NE}, we note that \eqref{eq:proxgrad_compact} is equivalently written as $\hy^{k+1} = \proj^\Xi _{\bs{\Omega}} \left (  \WDbs \hy^k - \alpha  \Rmc ^\top \Fbs (   \WDbs \hy^k) \right)$, where $\proj_{\bs{\Omega}}^\Xi$ is the projection in $\mc{H}_{\Xi}$ (i.e., $\proj_{\bs{\Omega}}^\Xi(\bs x) = \argmin_{\bs y \in \bs{\Omega}} \| \bs{x} - \bs{y} \|_{\Xi} $) . In fact, $\proj_{\bs{\Omega}} =\proj_{\bs{\Omega}}^\Xi$ block-wise under either  \cref{asm:diagonal_con}\emph{(i)} (due to block diagonality of  $\mat Q_i$ and the rectangular structure of $\bs{\Omega}$) or \cref{asm:diagonal_con}\emph{(ii)} (trivially). Moreover, $\hystar$ is a fixed point for \eqref{eq:proxgrad_compact}.  By nonexpansiveness of the projection operator \cite[Prop.~12.28]{BauschkeCombettes_2017}, we can finally write
\begin{align*}
	\| \hy^{k+1} - \hystar \|_{\Xi } & = \| \prox^\Xi _{\bs{g}} (\Fmc( \hy )) -  \prox^\Xi _{\bs{g}} (\Fmc(\hystar))\|_{\Xi}
	\\
	& \leq
	\| \Fmc( \hy )- \Fmc(\hystar)\|_{\Xi},
\end{align*}
and the conclusion follows by \cref{lem:NEcrux}.
\hfill $\blacksquare$

\subsubsection{Proof of \cref{th:GNE} }
\label{proof:GNE}
We can rewrite \eqref{eq:GNE_algo} with the change of variable $\hs \coloneqq \hsigma - \bs{ \mat B} x -\bs{b} $ by replacing $\hsigma ^k$ with $\hs^k+ \bs {\mat B} x^ k + \bs{b}$ in \eqref{eq:GNE_algo:x}, and  \eqref{eq:GNE_algo:sigma} with  
\begin{align}	\label{eq:GNE_algo:s}
        \hs^{k+1} =  \hs^k  - \beta \LDbssup{\sigma} (\hs^k + \bs{\mat B} x^k+\bs{b} ), \quad \hs^0 = \0.
    \end{align} 
    Let us define $\homega \coloneqq\col (x,\hs,\hz,\hlambda)$,
\allowdisplaybreaks
\begin{align}
\label{eq:opA}
 \mathfrak{A}(\homega) & \! \coloneqq \!
\underbrace
{\begin{bmatrix}
    \alpha \Fbstilde (x,\hsigma)+\bs{\mat B}^\top \LDbssup{\sigma } \hsigma
    \\
    \LDbssup{\sigma} \hsigma 
    \\
    \0 
    \\
    \bs{b}
\end{bmatrix} }_{ \coloneqq \mathfrak{A}_1}
\!
+
\!
\underbrace{\begin{bmatrix}
    \bs{\mat A}^ \top  \hlambda
    \\ 
    \0 
    \\ 
    - \LDbssup{\lambda} \hlambda
    \\
    \LDbssup{\lambda}\hz - \bs {\mat A } x 
\end{bmatrix}}_{\coloneqq \mathfrak A_2}
\! + \! 
\underbrace{\begin{bmatrix}
        \nc_{\Omega} \\
        \0 \\ \0 \\ \nc_{\R^{\n{\hlambda}}_{\geq 0}}
    \end{bmatrix}}_{\coloneqq \mathfrak A_3} 
    \\
    \label{eq:Phimatrix}
     \Phi  & \coloneqq \begin{bmatrix}
       \beta^{-1} \id  & \0 & \0
       & -\bs{\mat A} ^\top 
       \\
       \0 & \beta^{-1} \id  & \0 & \0
       \\
       \0 & \0 & \beta^{-1} \id  & \LDbssup{\lambda}
       \\
       - \bs{\mat A} & \0 & \LDbssup{\lambda} & \beta^{-1} \id 
    \end{bmatrix}
\end{align}
where $\hsigma$   here is just  a shorthand notation for $\hsigma = \hs + \bs{ \mat B} x+\bs{b} $. We assume that $0<\beta< 1/ \| \bs{\mat A}\|_\infty +   \| \LDbssup{\lambda} \|_{\infty} $, 
so that $\Phi \succ 0$; denote $\delta \coloneqq \uplambda_{\min}(\Phi)$. The proof is based on the following auxiliary results. We recall the definition of $\Piparallel{}$ (the projection matrix into the consensus subspace) and $\Ebs{}(\cdot)$ (see \cref{eq:Cconsensus}) in \cref{subsec:notation}, and that with the superscript $\sigma$ or $\lambda$ we distinguish the spaces of different dimension related to the variables $\bs{\sigma}$ and $\bs{\lambda}$. 
\begin{lemma}[Invariance]\label{lem:GNE_invariance}
    For all $\k$, $\Piparallel{\sigma} \hs^k = \0$.
    \hfill $\square$
\end{lemma}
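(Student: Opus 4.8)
The plan is to prove \cref{lem:GNE_invariance} by a short induction on $k$, after isolating the single algebraic identity that does all the work, namely $\Piparallel{\sigma}\LDbssup{\sigma} = \0$. First I would establish this identity. By \cref{asm:GNEconnected}, each weight matrix $\WDsup{q}{\sigma}$ is balanced, so the associated in-degree Laplacian $\LDsup{q}{\sigma}$ admits $\1_{N_q^\sigma}$ as a \emph{left} null vector, $\1_{N_q^\sigma}^\top \LDsup{q}{\sigma} = \0^\top$, in addition to the always-valid right relation $\LDsup{q}{\sigma}\1_{N_q^\sigma}=\0$. Recalling the block-diagonal and Kronecker structures $\Piparallel{\sigma} = \diag(((\1_{N_q^\sigma}\1_{N_q^\sigma}^\top \otimes \id_{\n{\sigma_q}})/N_q^\sigma)_\q)$ and $\LDbssup{\sigma} = \diag((\LDsup{q}{\sigma}\otimes \id_{\n{\sigma_q}})_\q)$, the product factorizes block-wise; using the mixed-product rule, the $q$-th block equals $\tfrac{1}{N_q^\sigma}\bigl(\1_{N_q^\sigma}(\1_{N_q^\sigma}^\top \LDsup{q}{\sigma})\bigr)\otimes \id_{\n{\sigma_q}} = \0$. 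Hence $\Piparallel{\sigma}\LDbssup{\sigma}=\0$.

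Then I would run the induction. The base case uses the initialization of the reformulated dynamics: since $\hs \coloneqq \hsigma - \bs{\mat B}x - \bs{b}$ and $\hsigma^0 = \bs{\mat B}x^0 + \bs{b}$ by \eqref{eq:hsigmainitialization}, we have $\hs^0 = \0$, so $\Piparallel{\sigma}\hs^0 = \0$. For the inductive step, I apply $\Piparallel{\sigma}$ to the update \eqref{eq:GNE_algo:s},
\begin{align*}
\Piparallel{\sigma}\hs^{k+1} = \Piparallel{\sigma}\hs^k - \beta\,\Piparallel{\sigma}\LDbssup{\sigma}\bigl(\hs^k + \bs{\mat B}x^k + \bs{b}\bigr),
\end{align*}
and the second term vanishes by the identity just proved, so $\Piparallel{\sigma}\hs^{k+1} = \Piparallel{\sigma}\hs^k = \0$ by the inductive hypothesis. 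This closes the induction and gives the claim for all $\k$.

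I do not anticipate a genuine obstacle here; the difficulty is entirely one of bookkeeping rather than conceptual content. The two points that require care are: making sure to invoke the \emph{left} null-space relation $\1^\top\LDsup{q}{\sigma}=\0^\top$, which is exactly where balancedness of $\WDsup{q}{\sigma}$ enters and which would fail for a merely rooted or row-stochastic design graph; and confirming that the $\hs$-reformulation is initialized at $\hs^0=\0$, consistent with \eqref{eq:hsigmainitialization}. Once these conventions are pinned down the argument is immediate, and the resulting invariance is precisely what confines the aggregation-tracking error to the consensus-orthogonal subspace throughout the iterations, as required in the subsequent convergence analysis of \cref{th:GNE}.
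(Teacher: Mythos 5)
Your proof is correct and follows essentially the same route as the paper, which disposes of the lemma in one line by induction from the initialization \eqref{eq:hsigmainitialization} (giving $\hs^0=\0$) and the update \eqref{eq:GNE_algo:s}. The only difference is that you make explicit the identity $\Piparallel{\sigma}\LDbssup{\sigma}=\0$, via the left null vector $\1^\top\LDsup{q}{\sigma}=\0^\top$ guaranteed by the balancedness of $\WDsup{q}{\sigma}$ in \cref{asm:GNEconnected}, a step the paper leaves implicit but which is exactly the hinge of the argument.
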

\begin{proof}
    Via induction and by \eqref{eq:GNE_algo:s}, since  $ \Piparallel{\sigma} \LDbssup{\sigma} = \0. $
\end{proof}

\begin{lemma}[Algorithm derivation]\label{lem:GNE_derivation}
The iteration in \eqref{eq:GNE_algo}, with \eqref{eq:GNE_algo:sigma} replaced by \eqref{eq:GNE_algo:s}, can be written as 
\begin{align}\label{eq:FBcompact}
    \mathfrak A_2 (\homega^{k+1}) +     \mathfrak A_3 (\homega^{k+1})  +\Phi (\homega^{k+1} - \homega^{k}) \ni   \! -  \mathfrak A_1 (\homega^k),
\end{align}
with $\mathfrak A_1$, $\mathfrak A_2$, $\mathfrak A_3$ as in \eqref{eq:opA}. \hfill $\square$
\end{lemma}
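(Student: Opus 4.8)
The plan is to prove \cref{lem:GNE_derivation} by a direct algebraic verification that the explicit recursion \eqref{eq:GNE_algo}, with \eqref{eq:GNE_algo:sigma} replaced by \eqref{eq:GNE_algo:s}, is equivalent to the implicit relation \eqref{eq:FBcompact}. Since $\mathfrak{A}_1$ is single-valued and $\mathfrak A_2$ is linear (with coefficient matrix read off from \eqref{eq:opA}), while $\mathfrak A_3=\col(\nc_{\Omega},\0,\0,\0)$ acts only on the primal block, the inclusion \eqref{eq:FBcompact} splits into four block rows in the unknowns $(x^{k+1},\hs^{k+1},\hz^{k+1},\hlambda^{k+1})$. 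First I would expand each row using the definitions of $\mathfrak A_1,\mathfrak A_2$ in \eqref{eq:opA} and of $\Phi$ in \eqref{eq:Phimatrix}, and reduce it to the matching line of the algorithm, recalling throughout that $\hsigma^k$ is shorthand for $\hs^k+\bs{\mat B}x^k+\bs{b}$.

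I would treat the primal block first. Expanding the $x$-row of \eqref{eq:FBcompact}, the term $\bs{\mat A}^\top\hlambda^{k+1}$ coming from $\mathfrak A_2$ cancels against the off-diagonal contribution $\bs{\mat A}^\top(\hlambda^{k+1}-\hlambda^k)$ produced by the last column of $\Phi$, leaving an inclusion of the form $\beta^{-1}(x^k-x^{k+1})-\big(\alpha\Fbstilde(x^k,\hsigma^k)+\bs{\mat B}^\top\LDbssup{\sigma}\hsigma^k+\bs{\mat A}^\top\hlambda^k\big)\in\nc_{\Omega}(x^{k+1})$. I would then invoke the characterization $v-\proj_{\Omega}(v)\in\nc_{\Omega}(\proj_{\Omega}(v))$ of the Euclidean projection (equivalently $\proj_{\Omega}=(\Id+\nc_{\Omega})^{-1}$), together with positive homogeneity of the normal cone to absorb the factor $\beta^{-1}$, to recognise this as exactly \eqref{eq:GNE_algo:x}. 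The $\hs$-row is immediate, since $\mathfrak A_2$ and $\mathfrak A_3$ vanish there: it reduces to $\0=\LDbssup{\sigma}\hsigma^k+\beta^{-1}(\hs^{k+1}-\hs^k)$, which upon substituting $\hsigma^k=\hs^k+\bs{\mat B}x^k+\bs{b}$ is precisely \eqref{eq:GNE_algo:s}.

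The dual blocks are the crux. Here the off-diagonal entries of $\Phi$ (the $\LDbssup{\lambda}$ and $\pm\bs{\mat A}$ blocks) interact with the skew coupling of $\mathfrak A_2$, so the $\hz$- and $\hlambda$-rows are not decoupled. I would exploit the block-triangular structure of the combined operator $\Phi+\mathfrak A_2$ in the elimination order that isolates one update at a time, solving these two rows sequentially; the cross terms then reassemble into the reflected/extrapolated quantities $2\hz^{k+1}-\hz^k$ and $2x^{k+1}-x^k$ appearing in \eqref{eq:GNE_algo:z}--\eqref{eq:GNE_algo:lambda}. Finally, reading \eqref{eq:FBcompact} as an implicit update exhibits it as a preconditioned forward--backward iteration, with $\mathfrak A_1$ playing the role of the single-valued (Lipschitz) forward term and $\mathfrak A_2+\mathfrak A_3$ that of the backward (resolvent) term, which is the representation the subsequent convergence analysis relies on.

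The main obstacle is purely the bookkeeping in this last step: tracking signs and the $k$ versus $k+1$ indexing through the off-diagonal couplings so that the extrapolated terms emerge exactly, and checking that $\Phi+\mathfrak A_2$ is triangular in a consistent elimination order so that the implicit relation \eqref{eq:FBcompact} really is solvable as the explicit, sequential scheme \eqref{eq:GNE_algo}. No deep analytic ingredient is needed beyond the projection/normal-cone identity used in the primal block; well-posedness of the resolvent is guaranteed once $\beta$ is small enough that $\Phi\succ0$, as already assumed.
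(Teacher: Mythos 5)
Your proposal is correct and takes essentially the same route as the paper's proof, which consists of the single sentence that the lemma ``follows by expanding the terms, and by recalling that $(\Id+\nc_{\Omega})^{-1}=\proj_{\Omega}$'' --- i.e., precisely the block-by-block expansion and projection/normal-cone identity you spell out, with the dual rows solved sequentially so that the extrapolated terms $2\hz^{k+1}-\hz^k$ and $2x^{k+1}-x^k$ appear. The only point worth flagging is that your intermediate primal inclusion $\beta^{-1}(x^k-x^{k+1})-\bigl(\alpha\Fbstilde(x^k,\hsigma^k)+\bs{\mat B}^\top\LDbssup{\sigma}\hsigma^k+\bs{\mat A}^\top\hlambda^k\bigr)\in\nc_{\Omega}(x^{k+1})$ corresponds to the sign-consistent forward--backward reading of \eqref{eq:FBcompact} with a symmetric $\Phi$ (off-diagonal blocks $-\bs{\mat A}^\top$ and $-\bs{\mat A}$), which is the reading that actually reproduces \eqref{eq:GNE_algo}; the signs as literally printed in \eqref{eq:FBcompact} and \eqref{eq:Phimatrix} contain typos, and your bookkeeping implicitly corrects them.
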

\begin{proof}
  The iteration in \eqref{eq:GNE_algo:x},\eqref{eq:GNE_algo:s},\eqref{eq:GNE_algo:z} \eqref{eq:GNE_algo:lambda} is retrieved 
   expanding the terms in \eqref{eq:FBcompact} and canceling identical terms, by noting that $\mathfrak A_3$ is the normal cone of the set $ \mc{S} = \Omega \times \R^{\n{\hsigma} } \times \R^ \n{\bs{\hlambda}} \times \R^\n{\hlambda}_{\geq 0} $ and hence  $\homega^{k+1} + \mathfrak A_3(\homega^{k+1}) \ni \homega'$ is equivalent to $\homega^{k+1} = \proj_{\mc{S}} (\homega')$ for any $\homega'$.
 \end{proof}

\begin{lemma}[Fixed points]\label{lem:GNE_zeros}
  The fixed points of the iteration in \eqref{eq:FBcompact} coincide with $\zer(\mathfrak A)$. The set $\zer(\mathfrak{A}) \cap  \Sigma \coloneqq \{\homega  \mid  \Piparallel{\sigma} \hs = \0\}$ is nonempty. Moreover, for any   $\homega^\star = (x^\star, \hs^\star, \hz^\star, \hlambda ^\star) \in \zer(\mathfrak{A}) \cap \Sigma$, we have that $\hsigma^\star \coloneqq \hs^\star + \bs{\mat B} x^\star +\bs{b} = \Ebs{}^{\sigma}(\sigma(x^\star))$, $\hlambda^\star = \Ebs{}^{\lambda}(\lambda^\star)$, where $(x^\star, \lambda^\star)$ solve \eqref{eq:KKT}, hence $x^\star$ is the \gls{v-GNE} of the game in \eqref{eq:game}. \hfill $\square$
 \end{lemma}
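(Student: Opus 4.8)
The plan is to prove the three assertions in the natural logical order: first the elementary identification of the fixed points of \eqref{eq:FBcompact} with $\zer(\mathfrak A)$, then the characterization of an \emph{arbitrary} $\homega^\star\in\zer(\mathfrak A)\cap\Sigma$, and finally nonemptiness, which is most easily obtained by reversing the characterization step. For the first claim I would simply substitute $\homega^{k+1}=\homega^k=\homega^\star$ into \eqref{eq:FBcompact}: the increment term $\Phi(\homega^{k+1}-\homega^k)$ then vanishes and the inclusion collapses to $\0\in\mathfrak A(\homega^\star)=(\mathfrak A_1+\mathfrak A_2+\mathfrak A_3)(\homega^\star)$, with the converse being immediate. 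Since $\Phi\succ0$ makes the backward operator $(\Phi+\mathfrak A_2+\mathfrak A_3)$ invertible ($\mathfrak A_2$ is skew-linear hence monotone and $\mathfrak A_3=\nc_\Omega\times\{\0\}^3$ is maximally monotone), \eqref{eq:FBcompact} is a genuine single-valued fixed-point map and the increment vanishes only at a true fixed point.

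For the characterization, I take $\homega^\star=(x^\star,\hs^\star,\hz^\star,\hlambda^\star)\in\zer(\mathfrak A)\cap\Sigma$ and read $\0\in\mathfrak A(\homega^\star)$ block by block. The $\hs$-block gives $\LDbssup{\sigma}\hsigma^\star=\0$ with $\hsigma^\star=\hs^\star+\bs{\mat B}x^\star+\bs b$; since the graphs $\{\gDsup{q}{\sigma}\}_{\q}$ are strongly connected (\cref{asm:GNEconnected}), \cref{lem:rooted} yields $\Null(\LDbssup{\sigma})=\Ebs{}^\sigma$, so $\hsigma^\star$ is consensual. Using $\homega^\star\in\Sigma$, i.e. $\Piparallel{\sigma}\hs^\star=\0$, together with the block-averaging identity $\Piparallel{\sigma}(\bs{\mat B}x^\star+\bs b)=\Ebs{}^\sigma(\sigma(x^\star))$ built into the definitions of $\bs{\mat B},\bs b$ (cf. \eqref{eq:hsigmainitialization}), I obtain $\hsigma^\star=\Ebs{}^\sigma(\sigma(x^\star))$. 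The $\hz$-block gives $\LDbssup{\lambda}\hlambda^\star=\0$, and since the $\{\gDsup{m}{\lambda}\}_{\m}$ are undirected connected, $\hlambda^\star\in\Ebs{}^\lambda$ is consensual, say $\hlambda^\star=\Ebs{}^\lambda(\lambda^\star)$. Projecting the $\hlambda$-block $\LDbssup{\lambda}\hz^\star-\bs{\mat A}x^\star+\bs a=\0$ onto the consensus subspace annihilates $\LDbssup{\lambda}\hz^\star$ and leaves the per-block sums $\sum_i(\mat A_{m,i}x_i^\star-a_{m,i})=\0$, i.e. primal feasibility $\mat Ax^\star=a$. Finally the $x$-block reads $\0\in\alpha\Fbstilde(x^\star,\hsigma^\star)+\bs{\mat B}^\top\LDbssup{\sigma}\hsigma^\star+\bs{\mat A}^\top\hlambda^\star+\nc_\Omega(x^\star)$; invoking $\LDbssup{\sigma}\hsigma^\star=\0$, the exactness $\Fbstilde(x^\star,\Ebs{}^\sigma(\sigma(x^\star)))=\op F(x^\star)$, the reconstruction identity $\bs{\mat A}^\top\Ebs{}^\lambda(\lambda^\star)=\mat A^\top\lambda^\star$, and the cone property $\alpha\,\nc_\Omega=\nc_\Omega$, this reduces to $\0\in\op F(x^\star)+\tfrac1\alpha\mat A^\top\lambda^\star+\nc_\Omega(x^\star)$. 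Combined with $\mat Ax^\star=a$, this is exactly \eqref{eq:KKTa} for the pair $(x^\star,\lambda^\star/\alpha)$ (the step size is absorbed in the multiplier), so $x^\star$ is the v-GNE.

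For nonemptiness I reverse this computation. Under \cref{asm:game_convexity,asm:strong_mon} the conditions \eqref{eq:KKTa} admit a solution $(x^\star,\lambda^\star)$ (existence via Slater's qualification, uniqueness of $x^\star$ via strong monotonicity). I then set $\hsigma^\star=\Ebs{}^\sigma(\sigma(x^\star))$ and $\hs^\star=\hsigma^\star-\bs{\mat B}x^\star-\bs b$, take $\hlambda^\star=\Ebs{}^\lambda(\alpha\lambda^\star)$, and choose $\hz^\star$ solving $\LDbssup{\lambda}\hz^\star=\bs{\mat A}x^\star-\bs a$; a block-by-block check then gives $\0\in\mathfrak A(\homega^\star)$, while $\Piparallel{\sigma}\hs^\star=\hsigma^\star-\Piparallel{\sigma}(\bs{\mat B}x^\star+\bs b)=\0$ places $\homega^\star\in\Sigma$ (which is precisely the invariant set of \cref{lem:GNE_invariance}). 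The main obstacle is the solvability of the equation for $\hz^\star$: it succeeds \emph{only} because primal feasibility $\mat Ax^\star=a$ forces the per-block sums of $\bs{\mat A}x^\star-\bs a$ to vanish, which, by symmetry of $\LDbssup{\lambda}$, places the right-hand side in $\range(\LDbssup{\lambda})=\Ebsperp{\lambda}$ (the orthogonal complement of the consensus subspace). This is the one step genuinely coupling the spectral structure of the connected dual Laplacian with primal feasibility, and it is also where the $\alpha$-scaling of the recovered multiplier must be tracked carefully.
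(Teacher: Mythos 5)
Your proposal is correct, and its core — reading the inclusion $\0\in\mathfrak A(\homega^\star)$ block by block, using $\Null(\LDbssup{\sigma})=\Ebs{}^\sigma$ and $\Null(\LDbssup{\lambda})=\Ebs{}^\lambda$ (via \cref{lem:rooted} and \cref{asm:GNEconnected}) to get consensus of $\hsigma^\star$ and $\hlambda^\star$, the $\Sigma$-membership plus the identity $\Piparallel{\sigma}(\bs{\mat B}x+\bs b)=\Ebs{}^\sigma(\sigma(x))$ to identify $\hsigma^\star$, and left-multiplication of the $\hlambda$-row by $\Piparallel{\lambda}$ to recover primal feasibility — is exactly the paper's argument. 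Where you genuinely add value is in the two claims the paper outsources to references: for the identification of fixed points with $\zer(\mathfrak A)$ the paper cites a lemma of Pavel, whereas you give the direct substitution argument together with the single-valuedness of the backward step (via $\Phi\succ0$, skew-linearity of $\mathfrak A_2$, and maximal monotonicity of $\mathfrak A_3$); and for nonemptiness the paper only says it ``can be shown similarly to'' a lemma in an earlier paper, whereas you supply the explicit construction, whose one nontrivial step — solvability of $\LDbssup{\lambda}\hz^\star=\bs{\mat A}x^\star-\bs a$, which holds because primal feasibility puts the right-hand side in $(\Ebs{}^\lambda)^\perp=\range(\LDbssup{\lambda})$ by symmetry of the dual Laplacian — you correctly isolate as the crux. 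You are also more careful than the paper on one point: the $x$-row yields the stationarity condition for the pair $(x^\star,\lambda^\star/\alpha)$, i.e., the recovered consensus multiplier is $\alpha$ times a KKT multiplier; the paper silently absorbs this rescaling (which is harmless, since \eqref{eq:KKTa} with \emph{some} multiplier is all that is needed to conclude $x^\star$ is the \gls{v-GNE}), while your bookkeeping makes the statement exact.
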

\begin{proof}
    By definition of inverse operator,  \eqref{eq:FBcompact} is equivalent to $\homega^{k+1} = (\Id+ \Phi^{-1}(\mathfrak{A}_2+\mathfrak A_3)) ^{-1}(\Id - \Phi^{-1} \mathfrak A_3) (\homega^k )$, i.e., 
    the \gls{FB} algorithm  \cite[§26.5]{BauschkeCombettes_2017} applied to the operator $\Phi^{-1}\mathfrak{A}$; its fixed points are the zeros of $\Phi^{-1}\mathfrak{A}$ \cite[Prop. 26.1(iv)]{BauschkeCombettes_2017}, and clearly $\zer({\Phi^{-1}\mathfrak{A}}) = \zer({\mathfrak{A}})$. Now consider any $\homega^\star \in \zer{\mathfrak{A}} \cap \Sigma$. Note that $ \hsigma^\star = \Piparallel{\sigma} \hsigma^\star  =  \Piparallel{\sigma} \hs^\star + \Piparallel{\sigma} \bs{\mat B} x^\star + \Piparallel{\sigma} \bs{b} = 
    \Ebs{}^\sigma (\sigma(x^\star))$, where the first equality follows by the second row in \eqref{eq:opA} and \cref{asm:GNEconnected} (and \cref{lem:rooted}), and the second by definition of $\bs{\mat B}$, $\bs{b}$ and $\Piparallel{\sigma}$. By the third row in \eqref{eq:opA}, $\hlambda^\star \in \Ebs{}^\lambda$; in turn, the first row retrieves the first \gls{KKT} condition in \eqref{eq:KKTa}. The second condition in \eqref{eq:KKTb} is obtained by left-multiplying the last row in \eqref{eq:opA} by $\Piparallel{\lambda}$. Conversely, it can be shown  as in \cite[Lem.~10]{Bianchi_GNEPPP_AUT2022} that, for any $(x^\star,\lambda^\star)$ solving \eqref{eq:KKT} (at least one such pair exists by \cref{asm:strong_mon}), there exists $\hz^\star \in \R^{\n{\hlambda}}$ such that $(x^\star,\hs^\star \coloneqq\Ebs{}^{\sigma}(\sigma(x^\star))-\bs{\mat B} x^\star- \bs b, \hz^\star, \Ebs{}^\lambda{\lambda^\star}) \in \zer({\mathfrak A})  $; therefore $\zer(\mathfrak A) \cap \Sigma \neq \varnothing$, since $\Piparallel{\sigma}\hs^\star=\0$. 
\end{proof}

\begin{lemma}[Monotonicity properties]\label{lem:GNE_monotonicity}
The operator $\mathfrak{A_2}+\mathfrak{A_3}$ is maximally monotone \cite[Def.~20.20]{BauschkeCombettes_2017}. Let $\bar{\uplambda} \coloneqq \uplambda_2(\frac{\LDsup{\sigma}+\LDsup{\sigma}}{2})$; then, for any $0<\alpha < \frac{4\mu  \bar \uplambda}{\bar\theta^2}$, the operator $\mathfrak{A_1}$ is $\eta_{\alpha}$-restricted cocoercive, for some $\eta_{\alpha}>0$ depending on $\alpha$: for all $\homega \in \Sigma$ and all $\homega^\star \in \zer(\mathfrak A) \cap \Sigma$, $\langle \mathfrak A_1(\homega)-\mathfrak{A_1}(\homega^\star), \homega- \homega^\star \rangle \geq  \eta_{\alpha} \|\mathfrak A_1(\homega)-\mathfrak{A_1}(\homega^\star) \| ^2$. \hfill $\square$
\end{lemma}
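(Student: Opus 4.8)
The plan is to treat the two assertions separately, the first being essentially structural and the second the technical core. For the monotonicity of $\mathfrak A_2+\mathfrak A_3$, I would first note that $\mathfrak A_3$ acts as the normal cone $\nc_\Omega$ on the $x$-block and as $\0$ elsewhere, hence is (maximally) monotone since $\Omega$ is closed and convex by \cref{asm:game_convexity}. For the linear part $\mathfrak A_2$, I would read off its block structure in the coordinates $(x,\hs,\hz,\hlambda)$: the only nonzero off-diagonal blocks are the $(x,\hlambda)$ pair $\bs{\mat A}^\top,-\bs{\mat A}$ and the $(\hz,\hlambda)$ pair $-\LDbssup{\lambda},\LDbssup{\lambda}$. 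These are negative transposes of one another precisely because $\LDbssup{\lambda}$ is symmetric, which holds since each $\WDsup{m}{\lambda}$ is symmetric by \cref{asm:GNEconnected}. Thus $\mathfrak A_2$ is skew-symmetric, so $\langle\mathfrak A_2\homega-\mathfrak A_2\homega',\homega-\homega'\rangle=0$, and the sum of two monotone operators is monotone.

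For the restricted cocoercivity of $\mathfrak A_1$, the key observation is that its last two blocks are the constants $\0$ and $\bs b$, so the difference $\mathfrak A_1(\homega)-\mathfrak A_1(\homega^\star)$ lives entirely in the $(x,\hs)$-subspace and both sides of the target inequality only involve the $x$- and $\hs$-components. Writing $\Delta\hsigma\coloneqq\Delta\hs+\bs{\mat B}\Delta x$ and expanding, the two cross terms combine telescopically into $\langle\LDbssup{\sigma}\Delta\hsigma,\Delta\hsigma\rangle$, leaving
\[
\langle\mathfrak A_1(\homega)-\mathfrak A_1(\homega^\star),\homega-\homega^\star\rangle
=\alpha\langle\Fbstilde(x,\hsigma)-\Fbstilde(x^\star,\hsigma^\star),\Delta x\rangle+\langle\LDbssup{\sigma}\Delta\hsigma,\Delta\hsigma\rangle.
\]
I would then invoke, for $\homega\in\Sigma$ and $\homega^\star\in\zer(\mathfrak A)\cap\Sigma$, the identities $\Fbstilde(x^\star,\hsigma^\star)=\op F(x^\star)$ and $\Piparallel{\sigma}\hsigma=\Ebs{}^\sigma(\sigma(x))$ (the latter from \cref{lem:GNE_invariance} together with the $N_q^\sigma$-scaling in $\bs{\mat B},\bs b$), so that $\hsigma-\Ebs{}^\sigma(\sigma(x))=\Piperp{\sigma}\Delta\hsigma$.

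Splitting $\Fbstilde(x,\hsigma)-\op F(x^\star)=[\Fbstilde(x,\hsigma)-\op F(x)]+[\op F(x)-\op F(x^\star)]$ and using $\op F(x)=\Fbstilde(x,\Ebs{}^\sigma(\sigma(x)))$, I bound the first bracket by $\theta\|\Piperp{\sigma}\Delta\hsigma\|$ via Lipschitzness of $\Fbstilde$ and the second below by $\mu\|\Delta x\|^2$ via strong monotonicity of $\op F$ (\cref{asm:strong_mon}); meanwhile \cref{lem:connected} gives $\langle\LDbssup{\sigma}\Delta\hsigma,\Delta\hsigma\rangle\ge\tfrac{\bar\uplambda}{2}\|\Piperp{\sigma}\Delta\hsigma\|^2$. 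Collecting these yields
\[
\langle\mathfrak A_1(\homega)-\mathfrak A_1(\homega^\star),\homega-\homega^\star\rangle
\ge\alpha\mu\|\Delta x\|^2-\alpha\theta\,\|\Delta x\|\,\|\Piperp{\sigma}\Delta\hsigma\|+\tfrac{\bar\uplambda}{2}\|\Piperp{\sigma}\Delta\hsigma\|^2,
\]
whose associated $2\times2$ form is positive definite whenever $\alpha<2\mu\bar\uplambda/\theta^2$; hence for small $\alpha$ the right-hand side dominates $c\,(\|\Delta x\|^2+\|\Piperp{\sigma}\Delta\hsigma\|^2)$ for some $c>0$.

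To close, I would bound $\|\mathfrak A_1(\homega)-\mathfrak A_1(\homega^\star)\|^2$ from above by $C\,(\|\Delta x\|^2+\|\Piperp{\sigma}\Delta\hsigma\|^2)$: the $\hs$-block equals $\LDbssup{\sigma}\Delta\hsigma=\LDbssup{\sigma}\Piperp{\sigma}\Delta\hsigma$ since the Laplacian annihilates the consensus direction, while the $x$-block is controlled using Lipschitzness of $\Fbstilde$ and $\|\Piparallel{\sigma}\Delta\hsigma\|\le\|\Piparallel{\sigma}\bs{\mat B}\|\,\|\Delta x\|$ (again from $\homega\in\Sigma$). As the lower and upper bounds are expressed in the \emph{same} quantity $\|\Delta x\|^2+\|\Piperp{\sigma}\Delta\hsigma\|^2$, restricted cocoercivity follows with $\eta=c/C$. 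The main obstacle I anticipate is the subspace bookkeeping: establishing $\Piparallel{\sigma}\hsigma=\Ebs{}^\sigma(\sigma(x))$ on $\Sigma$, and exploiting the fact that $\mathfrak A_1$ carries no information in the $\hz$- and $\hlambda$-directions so that these degenerate directions appear on \emph{neither} side of the inequality—which is exactly what rescues cocoercivity for an operator that is only partially strongly monotone.
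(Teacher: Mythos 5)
Your proposal is correct and follows essentially the same route as the paper: the first part is the identical skew-symmetry/normal-cone argument (note the paper's proof contains an index typo, attributing skew-symmetry to $\mathfrak{A}_1$ and the normal cone to $\mathfrak{A}_2$, where $\mathfrak{A}_2$ and $\mathfrak{A}_3$ are meant---your labeling is the right one), and the second part is exactly the argument the paper imports by citing \cite[Lemma~4]{GadjovPavel_Aggregative_TAC2021}, built on the same ingredients it lists: \cref{lem:connected}, Lipschitz continuity of $\Fbstilde$, the identity $\Fbstilde(x,\Ebs{}^{\sigma}(\sigma(x)))=\op{F}(x)$, and strong monotonicity of $\op{F}$. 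The only difference is that you carry out explicitly the telescoping of the cross terms into $\langle \LDbssup{\sigma}\Delta\hsigma,\Delta\hsigma\rangle$ and the ensuing two-by-two quadratic-form analysis, which the paper delegates to that reference.
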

\begin{proof}
$\mathfrak A_1$ is maximally monotone because it is a skew-symmetric linear operator \cite[Ex~20.35]{BauschkeCombettes_2017}, the normal cone $\mathfrak{A}_2$ is maximally monotone by \cite[Ex.~20.26]{BauschkeCombettes_2017}; then, since $\mathfrak A_1$ has full domain, $\mathfrak A_1+ \mathfrak A_2$ is maximally monotone by \cite[Th.~25.2]{BauschkeCombettes_2017}. 
For the second statement, for all $\homega \in \Sigma$, we have $\langle \homega-\homega^\star,\mathfrak A_1(\homega)-\mathfrak A_1(\homega^\star)\rangle = \alpha\langle x-x^\star, \Fbstilde(x,\hsigma) - \Fbstilde(x^\star,\hsigma^\star)\rangle+ \langle \hsigma-\hsigma^\star, \LDbssup{\sigma}(\hsigma-\hsigma^\star)\rangle $, and the result follows identically to \cite[Lemma~5]{GadjovPavel_Aggregative_TAC2021} 
by using  \cref{lem:connected}, that $\Fbstilde$ is $\bar\theta$-Lipschitz, that $\Fbstilde (x, \Piparallel{\sigma} \hsigma) = F(x)$ and $F(x)$ is $\mu$-strongly monotone (we refer to \cite{GadjovPavel_Aggregative_TAC2021} for the expression of $\eta_\alpha$).
\end{proof}

\cref{lem:GNE_derivation} recasts \eqref{eq:GNE_algo} as a preconditioned \gls{FB} algorithm \cite{Pavel_GNE_TAC2020}, applied to the operators $\mathfrak A_1$, $\mathfrak A_2+\mathfrak A_3$, with preconditioning matrix $\Phi \succ 0$. In turn, \cref{lem:GNE_monotonicity}  ensures the conditions on the operators $\mathfrak{A}_1$ and $\mathfrak{A}_2+\mathfrak{A}_3$ that guarantee convergence of the preconditioned \gls{FB} method to a fixed point in $\zer(\mathfrak A) \cap \Sigma$, provided that the stepsize $\beta$ is chosen such that $0<\beta < 2\eta_\alpha \delta$ (this can be proven by showing the decrease of the Lyapunov function $\|\homega^k- \homega^\star\|_{\Phi}$, with analysis restricted to the invariant subspace $\Sigma$, as in \cite[Th.~1]{GadjovPavel_Aggregative_TAC2021}\cite[Th.~2]{Pavel_GNE_TAC2020}; the  argument is standard and omitted here due to space limitations). Finally, \cref{lem:GNE_invariance,lem:GNE_zeros} characterize such a fixed point as per statement.
\hfill $\blacksquare$

\bibliographystyle{IEEEtran}
\bibliography{library}

\begin{IEEEbiography}[{\includegraphics[width=1.0
    in,height=1.35in,clip,keepaspectratio]{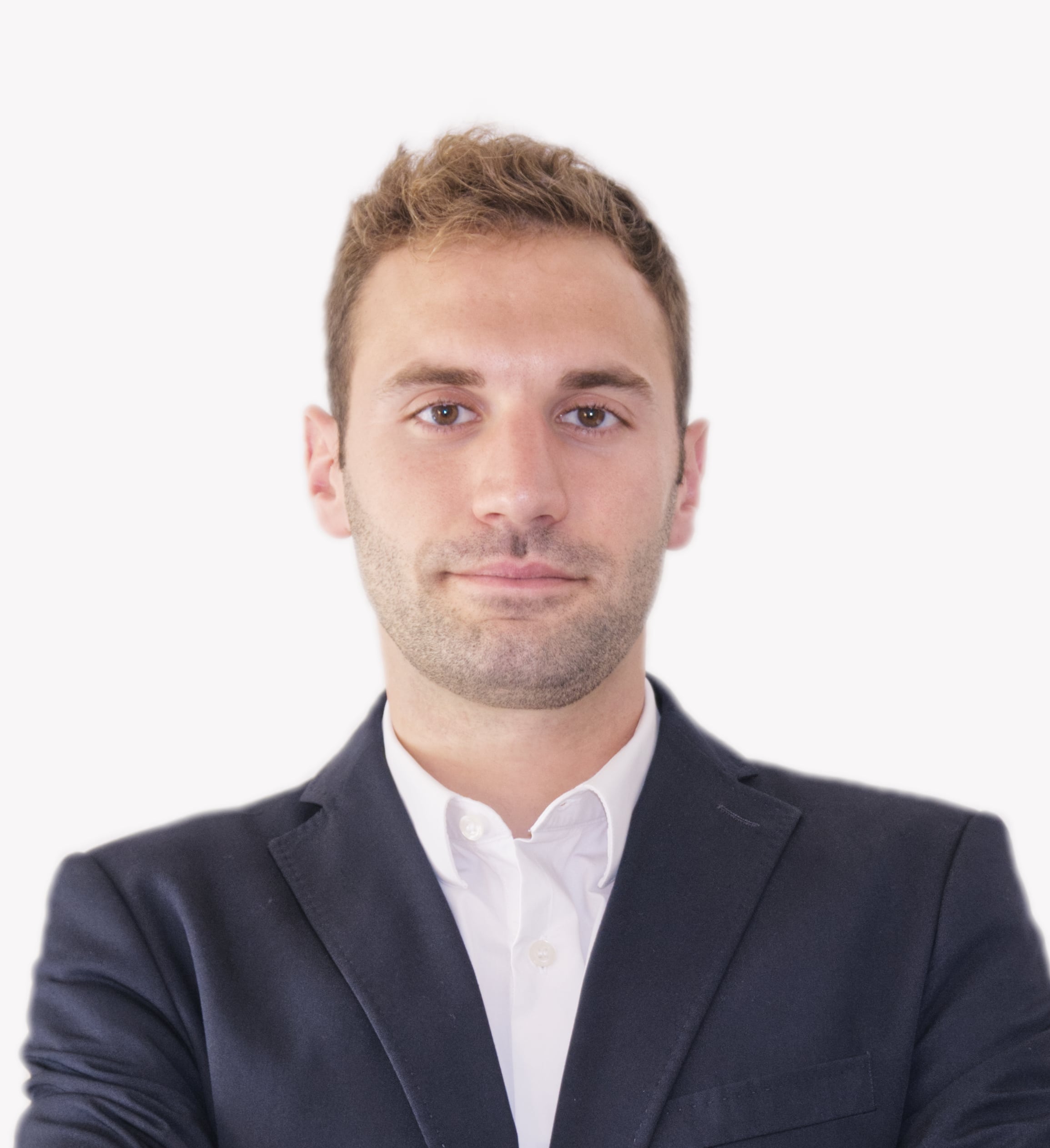}}]{Mattia Bianchi}{\space} is a postdoctoral researcher at the Authomatic Control Laboratory (IfA), ETH Zürich, Switzerland. He received the Bachelor’s degree in Information Engineering and the Master’s degree in Control Systems Engineering, both from University of L’Aquila (IT), in 2016 and 2018, respectively. He received the Ph.D. degree in Automatic Control from Delft University of Technology (NL) in 2023. In 2018 he visited the Control Systems Technology group, TU Eindhoven (NL). In 2021-2022 he visited the Mechanical and Aerospace Engineering Department, University of California San Diego (USA). His research interests include game theory and operator theory to solve decision and control problems for complex systems of systems, in the presence of network coupling and uncertainty. He is recipient of the 2021 Roberto Tempo Best Paper Award at the IEEE Conference on Decision and Control.
\end{IEEEbiography}

\begin{IEEEbiography}[{\includegraphics[width=1.0
    in,height=1.35in,clip,keepaspectratio]{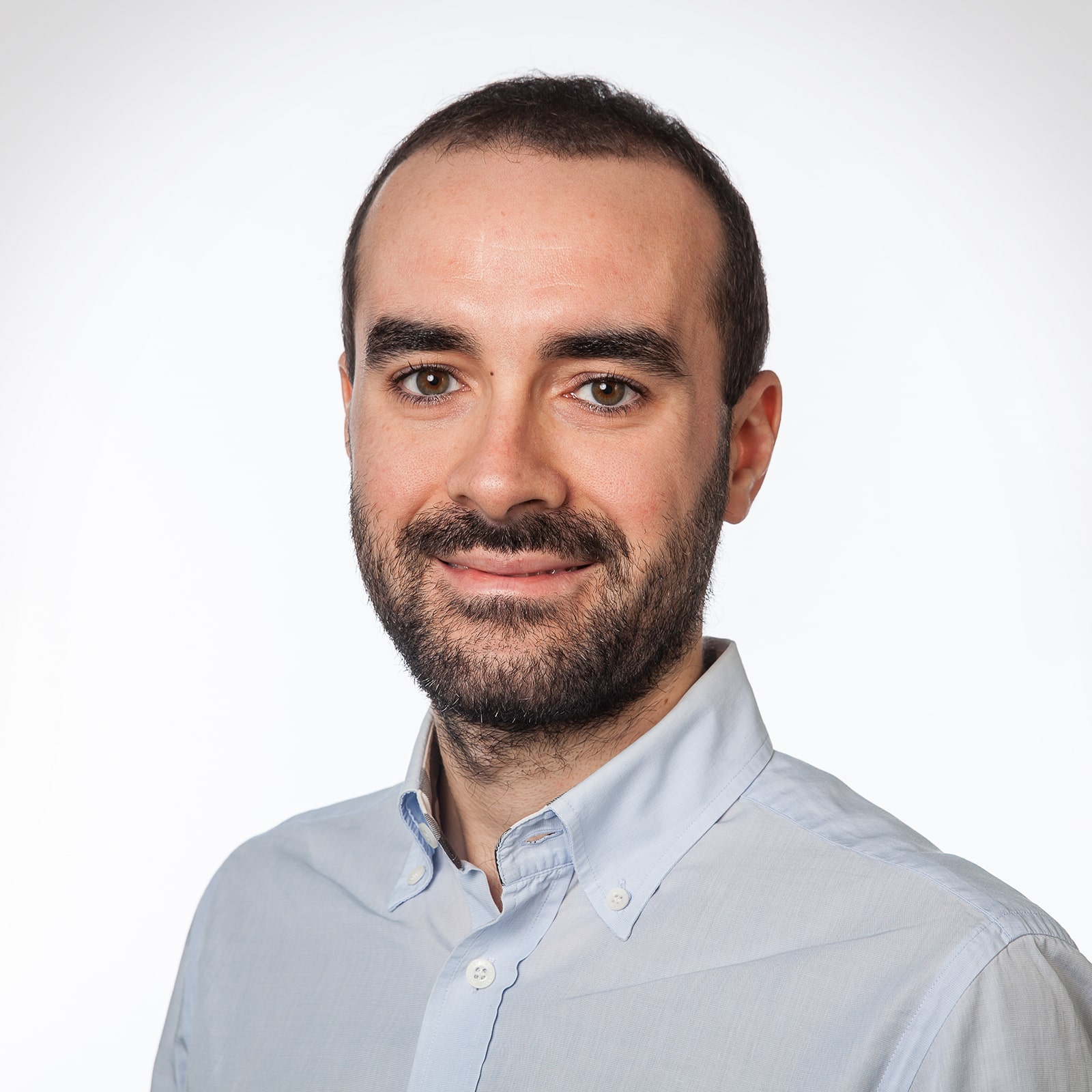}}]{Sergio Grammatico}{\space} 
 is an Associate Professor at the Delft Center for Systems and Control, TU Delft, The Netherlands. Born in 1987, he received the Bachelor’s degree in Computer Engineering, the Master’s degree in Automatic Control Engineering, and the Ph.D. degree in Automatic Control, all from the University of Pisa, Italy, in February 2008, October 2009, and March 2013 respectively. He also received a Master’s degree in Engineering Science from the Sant’Anna School of Advanced Studies, the Italian Superior Graduate School (Grande École) for Applied
Sciences, in November 2011. In 2013–2015, he was a postdoc researcher in the Automatic Control Laboratory, ETH Zurich, Switzerland. In 2015–2018, he was an Assistant Professor in the Department of Electrical Engineering, Control Systems, TU Eindhoven. He was awarded a 2005 F. Severi B.Sc. Scholarship by the Italian High-Mathematics National Institute, and a 2008 M.Sc. Fellowship by the Sant’Anna School of Advanced Studies.  He is a recipient of the 2021 Roberto Tempo Best Paper Award at the IEEE Conference on Decision and Control. He is currently an Associate Editor of the IEEE Trans. on Automatic Control (2018–present) and of Automatica (2020–present).
\end{IEEEbiography}

\end{document}